 \newtheorem{thm}{Theorem}[section]
 \newtheorem{cor}[thm]{Corollary}
 \newtheorem{lem}[thm]{Lemma}
 \newtheorem{prop}[thm]{Proposition}
 \theoremstyle{definition}
 \theoremstyle{remark}
 \newtheorem{rem}[thm]{Remark}
 \numberwithin{equation}{section}
\begin{document}

% Editor: Nikolai L. Vasilevski

%Idea: Maybe have everything with s.n. ideals first then work on Fock Space.  See what Duane does.

\title{Toeplitz Operators on the Fock Space in a Symmetrically-Normed Ideal}
\maketitle
Adam Orenstein \footnote{Adam Orenstein \\214 Fitzelle Hall\\SUNY Oneonta\\Oneonta, NY 13820\\Adam.Orenstein@oneonta.edu\\(607)436-3520} \smallskip

\textbf{Abstract}:  We look at Toeplitz operators $T_\nu$ on the Fock Space (also known as the Segal-Bargmann space) which have a positive Borel measure $\nu$ as a symbol.  We characterize when $\left(T_\nu\right)^s$ for $0<s\leq 1$ is in the symmetrically normed ideal associated with any given symmetric norming function.

\textbf{Keywords:} Toeplitz Operator; Fock Space; Symmetrically-Normed Ideal; Symmetric Norming Function. \smallskip

\textbf{Math subject Classification (2010)}: 47B35; 47B10.

%\part{Toeplitz Operators on the Fock Space \\ and Symmetric Norms}
%\chapter{Preliminaries}

\section{Introduction and Preliminaries}
\subsection{Fock Space and Toeplitz Operators}\label{snnotat}

Let $|\cdot|$ be the usual Euclidean norm and $d\mu$ be the measure on $\mathbb{C}^n$ defined by \[d\mu(z)=\frac{e^{-|z|^2}}{\pi^n}dV(z)\] where $dV$ is the standard volume measure on $\mathbb{C}^n$.  Let $\langle \cdot, \cdot\rangle$ be the inner product on $L^2(\mathbb{C}^n,d\mu)$ defined by \[\langle f,g\rangle=\int_{\mathbb{C}^n}f(z)\overline{g(z)}d\mu(z) \text{ for all }f,g\in L^2(\mathbb{C}^n,d\mu)\] and $\|\cdot\|_2$ be the corresponding induced norm.  Then the Fock space (also known as the Segal-Bargmann space) $H^2=H^2(\mathbb{C}^n,d\mu)$ is defined as the set of all functions in $L^2(\mathbb{C}^n,d\mu)$ that are analytic on $\mathbb{C}^n$.  It is well-known in the literature that $H^2$ is a closed subspace of $L^2(\mathbb{C}^n,d\mu)$ with respect to $\|\cdot\|_2$.  Thus $H^2$ is itself a Hilbert Space with respect to $\langle\cdot,\cdot \rangle$.

Let $\mathfrak{L}(H^2)$ be the set of all bounded operators on $H^2$ with operator norm denoted by $\|\cdot\|$ and let $P:L^2(\mathbb{C}^n,d\mu)\rightarrow H^2$ be the orthogonal projection of $L^2(\mathbb{C}^n,d\mu)$ onto $H^2$.  Then for any $f\in L^\infty(\mathbb{C}^n,dV)$, the Toeplitz operator on $H^2$ with symbol $f$ is the operator $T_f$ defined by \[T_f(g)=P(fg) \text{ for all }g\in H^2.\]  A straightforward calculation shows $\|T_f\|\leq \|f\|_{\infty}$.  Thus $T_f\in\mathfrak{L}(H^2)$.  For any $A\in\mathfrak{L}(H^2)$, let $A^*$ denote the adjoint of $A$.  Then by direct calculation, \[\left(T_f\right)^*=T_{\overline{f}} \text{ and }  f\geq 0 \text{ implies } T_f\geq 0.\]

More generally if $\nu$ is a complex Borel measure on $\mathbb{C}^n$ such that \begin{equation}\label{wellDef}\int_{\mathbb{C}^n}\left|K(z,w)\right| e^{-|w|^2}d|\nu|(w)<\infty \text{ for all } z\in\mathbb{C}^n\end{equation} then we define the Toeplitz operator $T_\nu$ on $H^2$ by \[\left(T_\nu(g)\right)(z)=\int_{\mathbb{C}^n}K(z,w)g(w)e^{-|w|^2}d\nu(w) \text{ for all } z\in\mathbb{C}^n \text{ and for all } g\in H^2 \] where $K(z,w)=e^{z_1\overline{w_1}+\cdots+z_n\overline{w_n}}$ is the reproducing kernel of $H^2$ \cite{Josh}.  Because of \eqref{wellDef} and the Cauchy-Schwarz inequality $T_\nu$ is well-defined on a dense subset of $H^2$, namely the set of all functions $g\in H^2$ such that \[g(w)=\sum_{j=1}^N c_jK(w,z_j) \text{ for all }w\in\mathbb{C}^n\] where $\{z_j\}_{j=1}^N\subseteq \mathbb{C}^n$, $\{c_j\}_{j=1}^N\subseteq \mathbb{C}$ and $N\in\mathbb{N}$ are arbitrary \cite[page 41]{zhuFock}.  We will assume all of our measures satisfy \eqref{wellDef}.

For any $z=(a_1+ib_1,a_2+ib_2,\ldots,a_n+ib_n)\in \mathbb{C}^{n}$ where $\{a_j\}_{j=1}^n, \{b_j\}_{j=1}^n\subseteq\mathbb{R}$, define $|z|_\infty$ by \[|z|_\infty=\max\{|a_j|, |b_j|\}_{j=1}^n.\]  Then \begin{equation}\label{normIn}|z| \leq \sqrt{2n}|z|_\infty \text{ and } |z|_\infty\leq |z| \text{ for all }z\in\mathbb{C}^n. \end{equation}  For any $\delta>0$ and $z\in\mathbb{C}^n$, let $B(z,\delta)=\{w\in\mathbb{C}^n:|w-z|_\infty<\delta\}$ and $\overline{B(z,\delta)}=\{w\in\mathbb{C}^n:|w-z|_\infty\leq\delta\}$.  For any $\alpha>0$, let $\widetilde{\nu}_\alpha$ be the function on $\mathbb{C}^n$ defined by \[\widetilde{\nu}_\alpha(z)=\int_{\mathbb{C}^n}e^{-\alpha|z-w|^2}d\nu(w) \text{ for all } z\in\mathbb{C}^n.\]  For any $r>0$ we define the function $\widehat{\nu}_r:\mathbb{C}^n\rightarrow \mathbb{C}$ by \[\widehat{\nu}_r(z)=\nu(B(z,r)) \text{ for all } z\in\mathbb{C}^n.\]

For any $r>0$, the $r$-lattice of $\mathbb{C}^n$ is defined as $\{r(b_1+ic_1,\ldots,b_n+ic_n):\{b_j\}_{j=1}^n,\{c_j\}_{j=1}^n\subseteq\mathbb{Z}\}$.  Since the $r$-lattice of $\mathbb{C}^n$ is countable, we can write the $r$-lattice of $\mathbb{C}^n$ as a sequence $\{a_j\}_{j=1}^\infty$.  We also point out the following two useful facts about $\{a_j\}_{j=1}^\infty$, the first of which is easy to prove and the second is due to \cite[Theorem 11.1]{MunMan}: \begin{equation}\label{lattice} \mathbb{C}^n=\bigcup_{j=1}^\infty \overline{B\left(a_j,\frac{r}{2}\right)}=\bigcup_{j=1}^\infty B\left(a_j,r\right) \text{ and }  \mu\left(\overline{B\left(a_j,\frac{r}{2}\right)}\cap \overline{B\left(a_k,\frac{r}{2}\right)}\right)=0 \hspace{.1cm} \text{ if } k\neq j.\end{equation}

We will now state some important properties about Toeplitz operators and the Fock space that we will need.  \begin{lem}\label{tilde} Suppose $\nu$ is a positive Borel measure on $\mathbb{C}^n$, $r>0$ and $\alpha>0$.  Then \[ \widehat{\nu}_r(z)\leq e^{\alpha2nr^2}\widetilde{\nu}_\alpha(z) \text{ for all } z\in\mathbb{C}^n.\]\end{lem}  A proof of Lemma \ref{tilde} can be easily made using \eqref{normIn} and the proof of \cite[Lemma 2.2]{Josh}.

For any $z,w\in\mathbb{C}^n$, let $k_z(w)=\frac{K(w,z)}{\sqrt{K(z,z)}}=K(w,z)e^{\frac{-|z|^2}{2}}$.  The next result we will need is Theorem \ref{atomCor}, which is \cite[Theorem 8.4]{Hank}.

\begin{thm}\label{atomCor}
There exists $\delta>0$ such that if $0<\rho<\delta$ then for some positive constants $C_1(\rho)$ and $C_2(\rho)$, \begin{equation}\label{separ} C_1(\rho)\|g\|_2^2\leq \sum_{j=1}^\infty |\langle g,k_{b_j}\rangle|^2\leq C_2(\rho)\|g\|_2^2\end{equation} for all $g\in H^2$ where $\{b_j\}_{j=1}^\infty$ is the $\rho$-lattice of $\mathbb{C}^n$.
\end{thm}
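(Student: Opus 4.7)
The plan is to establish both frame inequalities by viewing the sum $\sum_j|\langle g,k_{b_j}\rangle|^2$ as a Riemann sum for $\pi^n\|g\|_2^2$ that is tight when the lattice spacing $\rho$ is small, so the existence of $\delta$ will come from a Neumann-series/perturbation argument. Throughout, the key identity is that, for $z\in\mathbb{C}^n$, the function $w\mapsto g(w)e^{-\langle w,z\rangle}$ is entire, so that $|g(w)|^2 e^{-|w|^2}=e^{|z|^2-|w-z|^2}\,|g(w)e^{-\langle w,z\rangle}|^2$ decomposes the weighted Fock integrand as a smooth Gaussian factor times the modulus squared of an entire function.

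For the upper bound, fix $j$ and apply the sub–mean value inequality for the plurisubharmonic function $|g(w)e^{-\langle w,b_j\rangle}|^2$ over the polydisc $B(b_j,\rho)$:
\[
|g(b_j)|^2 e^{-2|b_j|^2}\;\le\;\frac{1}{(2\rho)^{2n}}\int_{B(b_j,\rho)}|g(w)|^2 e^{-2\mathrm{Re}\langle w,b_j\rangle}\,dV(w).
\]
Using $|b_j|^2-2\mathrm{Re}\langle w,b_j\rangle=|w-b_j|^2-|w|^2$ and the uniform bound $|w-b_j|^2\le 2n\rho^2$ from \eqref{normIn} gives
\[
|\langle g,k_{b_j}\rangle|^2 = |g(b_j)|^2 e^{-|b_j|^2}
\;\le\;\frac{e^{2n\rho^2}}{(2\rho)^{2n}}\int_{B(b_j,\rho)}|g(w)|^2 e^{-|w|^2}\,dV(w).
\]
Summing in $j$ and using that the polydiscs $\{B(b_j,\rho)\}_{j\ge 1}$ cover $\mathbb{C}^n$ with overlap at most $2^{2n}$ yields $\sum_j|\langle g,k_{b_j}\rangle|^2\le C_2(\rho)\|g\|_2^2$, which is the right-hand inequality of \eqref{separ}.

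For the lower bound, I would use the tiling property \eqref{lattice} for the half-radius balls to write
\[
\pi^n\|g\|_2^2 \;=\; \int_{\mathbb{C}^n}|g(w)|^2 e^{-|w|^2}\,dV(w)\;=\;\sum_{j=1}^\infty\int_{\overline{B(b_j,\rho/2)}}|g(w)|^2 e^{-|w|^2}\,dV(w),
\]
and then compare each piece to $\rho^{2n}|g(b_j)|^2 e^{-|b_j|^2}$, which is precisely $\rho^{2n}|\langle g,k_{b_j}\rangle|^2$. Writing $G_j(w)=g(w)e^{-\langle w,b_j\rangle}$ (entire) and $e^{-|w-b_j|^2}=1+O(\rho^2)$ uniformly on $\overline{B(b_j,\rho/2)}$, the task reduces to showing
\[
\biggl|\int_{\overline{B(b_j,\rho/2)}}|G_j(w)|^2\,dV(w)\;-\;\rho^{2n}|G_j(b_j)|^2\biggr|\;\le\;\eta(\rho)\int_{B(b_j,\rho)}|G_j(w)|^2\,dV(w),
\]
with $\eta(\rho)\to 0$ as $\rho\to 0$. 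This follows from Cauchy-type estimates: since $|G_j|^2$ is plurisubharmonic, its oscillation on a polydisc of radius $\rho/2$ is controlled by $\rho$ times its $L^2$-average on a slightly larger polydisc. Translating back through the factor $e^{|b_j|^2}$ and again invoking the finite-overlap bound, summation produces
\[
\bigl|\pi^n\|g\|_2^2-\rho^{2n}\textstyle\sum_j|\langle g,k_{b_j}\rangle|^2\bigr|\;\le\;\widetilde{\eta}(\rho)\,\pi^n\|g\|_2^2,
\]
with $\widetilde{\eta}(\rho)\to 0$. Choosing $\delta$ so that $\widetilde{\eta}(\rho)<1$ for $\rho<\delta$ gives both inequalities simultaneously with $C_1(\rho)=(1-\widetilde{\eta}(\rho))\pi^n\rho^{-2n}$ and $C_2(\rho)=(1+\widetilde{\eta}(\rho))\pi^n\rho^{-2n}$.

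The main obstacle is the quantitative Riemann-sum error: one must produce a uniform (in $j$) estimate controlling $|G_j(w)|^2-|G_j(b_j)|^2$ on $B(b_j,\rho/2)$ in a way that sums over $j$ without losing control. The cleanest way to do this is to exploit plurisubharmonicity of $|G_j|^2$ on a slightly larger polydisc $B(b_j,\rho)$, which upgrades pointwise control into an $L^2$-control whose sum telescopes via the finite-overlap property rather than depending on $g$. The other technical point is showing $\widetilde{\eta}(\rho)$ is independent of $g\in H^2$; this comes from the fact that the local error estimates scale linearly in the local $L^2$-energy, which aggregates to the global norm $\|g\|_2^2$.
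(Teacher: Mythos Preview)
The paper does not prove this theorem; it simply cites it as Theorem~8.4 of Janson--Peetre--Rochberg \cite{Hank}. Your upper frame inequality is essentially correct: the sub-mean-value inequality for the plurisubharmonic function $|G_j|^2$ (applied on a polydisc inscribed in the cube $B(b_j,\rho)$, so the constant should be $\pi^n\rho^{2n}$ rather than $(2\rho)^{2n}$), together with the bounded-overlap property of the cubes $B(b_j,\rho)$, does yield the right-hand side of \eqref{separ}.

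The lower bound, however, has a genuine gap. Your key local estimate
\[
\Bigl|\int_{\overline{B(b_j,\rho/2)}}|G_j(w)|^2\,dV(w)-\rho^{2n}|G_j(b_j)|^2\Bigr|\;\le\;\eta(\rho)\int_{B(b_j,\rho)}|G_j(w)|^2\,dV(w),\qquad \eta(\rho)\to 0,
\]
is false. Take $n=1$, $g(w)=w\in H^2$, and the lattice point $b_j=0$, so that $G_j(w)=w$ and $G_j(b_j)=0$. A direct computation gives left-hand side $=\rho^4/6$ while $\int_{B(0,\rho)}|w|^2\,dV=8\rho^4/3$; hence the best possible $\eta(\rho)$ at this single cube is $1/16$, \emph{independent of} $\rho$. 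Your heuristic ``oscillation $\lesssim\rho\times L^2$-average'' fails for the same scaling reason: Cauchy estimates give $|\nabla G_j|\lesssim\rho^{-1}\sup_{B(b_j,\rho)}|G_j|$ on $B(b_j,\rho/2)$, and multiplying by the diameter $\sim\rho$ yields only an $O(1)$ bound, not $o(1)$. In short, a term-by-term triangle-inequality comparison cannot force the Riemann sum close to the integral; the errors do not shrink cube-by-cube, and the cancellation among cubes is essential. A route that does work is to estimate the frame operator $S_\rho=\rho^{2n}\sum_j k_{b_j}\otimes k_{b_j}$ directly and prove $\|S_\rho-\pi^n I\|\to 0$ via a kernel (Schur-type) argument, using the uniform Lipschitz behaviour of $w\mapsto e^{-|z-w|^2}$ rather than the oscillation of the unknown $G_j$; this is the spirit of the argument in \cite{Hank}.
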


\subsection{Symmetric norming functions, Symmetrically-normed ideals and s-numbers}\label{snSection}

We will now define symmetric norming functions and symmetrically-normed ideals.  For this section we let $\mathcal{H}$ denote any separable complex Hilbert space, $\mathfrak{L}(\mathcal{H})$ denote the set of all bounded linear operators on $\mathcal{H}$ with operator norm denoted by $\|\cdot\|_\mathcal{H}$, $\mathfrak{LC}(\mathcal{H})$ be the set of all compact operators on $\mathcal{H}$ and, given any $A\in\mathfrak{L}(\mathcal{H})$, let $\sigma_{\mathcal{H}}(A)$ denote the spectrum of $A$ as an element of $\mathfrak{L}(\mathcal{H})$.  Since we will be working with sequences, we will from now on say a sequence of real numbers $\{c_j\}_{j=1}^\infty$ is nonincreasing if $c_{j+1}\leq c_j$ for all $j\geq1$ and nondecreasing if $c_j\leq c_{j+1}$ for all $j\geq1$.

\subsubsection{s-numbers}

We will define now the s-numbers of a bounded linear operator on $\mathcal{H}$ and give some important results about the s-numbers.  We will first define the numbers $\lambda_j(H)$ $(j=1,2,\ldots )$ for any $H\geq0$.  A point $\lambda\in\sigma_\mathcal{H}(A)$, where $A\in\mathfrak{L}(\mathcal{H})$ is self-adjoint, is called a point of the condensed spectrum of $A$ if $\lambda$ is either an accumulation point of $\sigma_\mathcal{H}(A)$ or an eigenvalue of $A$ of infinite multiplicity.

Let $H\geq0$ and let $\eta=\sup\sigma_\mathcal{H}(H)$.  If $\eta$ is in the condensed spectrum of $H$, then we put \[\lambda_j(H)=\eta \hspace{1cm} (j=1,2,\ldots).\]  If $\eta$ does not belong to the condensed spectrum of $H$, then it is an eigenvalue of finite multiplicity. In this case we put \[\lambda_j(H)=\eta \hspace{1cm} (j=1,2,\ldots,p)\] where $p$ is the multiplicity of the eigenvalue $\eta$.

In the latter case the remaining numbers $\lambda_j(H)$ $(j=p+1,\ldots)$ are defined by \[\lambda_{p+j}(H)=\lambda_j(H_1) \hspace{1cm} (j=1,2,\ldots)\] where the operator $H_1$ is given by \begin{equation}\label{snDef}H_1=H-\eta P,\end{equation} and $P$ is the orthoprojector onto the eigenspace of the operator $H$ corresponding to the eigenvalue $\eta$.  For any $A\in\mathfrak{L}(\mathcal{H})$, we define the s-numbers of $A$, $s_j(A) \ (j=1,2,\ldots)$, by \[s_j(A)=\lambda_j(H) \ (j=1,2,\ldots )\] where $H=(A^*A)^{\frac{1}{2}}$.  The definition given above can also be found in \cite[page 59]{Gohb}.  We will denote the sequence of s-numbers of $A$ as $\{s_j(A)\}_{j=1}^\infty$ enumerated so that $\{s_j(A)\}_{j=1}^\infty$ is nonincreasing and so as to include multiplicities.  %Note that this implies if $s_{j_1}(A)=\cdots=s_{j_N}(A)$ for some $\{j_1,j_2,\ldots,j_N\}\subseteq \mathbb{N}$, then $s_{j_1}(A)$ is listed $N$ times in $\{s_j(A)\}_{j=1}^\infty$.

We will now give some important results about $s$-numbers.  The next lemma is based on the definition of s-numbers.

\begin{lem}\label{eigen}  Let $H,C,D\in \mathfrak{L}(\mathcal{H})$.  Then
\begin{enumerate}

\item $s_1(D)=\|D\|_\mathcal{H}$.

\item $s_j(C|D|C^*))\leq \|C\|_\mathcal{H}^{2}s_j(|D|)$ for all $j\geq1$.

\item $s_j(D)=s_j(|D|)$ for all $j\geq1$.

\item If $0\leq C\leq D$, then $s_j(C)\leq s_j(D)$ $\text{ for all } j\geq1$.

%\item For any $0<s\leq 1$ and $H\geq0$, $\{s_j(H^s):j\geq1\}=\{s_j(H)^s:j\geq1\}$ %\label{positiveSN}.
\end{enumerate}
\end{lem}

The next result is very useful and can easily be proved using Functional Calculus and the material above.  \begin{prop}\label{prop}
Let $A\in\mathfrak{LC}(\mathcal{H})$ such that $A\geq 0$.  Then for any nonnegative function $f$ which is bounded on $\mathbb{C}$, continuous at 0 and satisfies $f(0)=0$, the following statements hold:

\begin{enumerate}
\item $f(A)\geq 0$

\item $f(A)\in\mathfrak{LC}(\mathcal{H})$

\item $\{f(s_j(A)):j\geq1\}=\{s_j(f(A)):j\geq1\}$.

\item If $f$ is nondecreasing on $\sigma_{\mathcal{H}}(A)$, then $f(s_j(A))=s_j(f(A))$ for all $j\geq1$.

\end{enumerate}
\end{prop}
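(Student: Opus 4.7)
The plan is to exploit the spectral decomposition of $A$ supplied by Lemma \ref{cptSeq}. Since $A\in\mathfrak{LC}(\mathcal{H})$ and $A\geq 0$, the operator is self-adjoint and compact, so the lemma (applied with $\sigma_j=e_j$) yields an orthonormal sequence $\{e_j\}_{j=1}^{\infty}\subseteq \mathcal{H}$ together with a sequence of nonnegative reals $\{\lambda_j\}_{j=1}^{\infty}$ satisfying $\lambda_j\to 0$ and
\[
A=\sum_{j=1}^{\infty}\lambda_j\, e_j\otimes_{\mathcal{H}} e_j,
\qquad \{s_j(A):j\geq 1\}=\{\lambda_j:j\geq 1\}.
\]
Since $f(0)=0$, the Borel functional calculus produces
\[
f(A)=\sum_{j=1}^{\infty} f(\lambda_j)\, e_j\otimes_{\mathcal{H}} e_j,
\]
with no contribution from the kernel of $A$; this will serve as the working formula for $f(A)$.

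From this formula the first three statements follow quickly. For (1), the coefficients $f(\lambda_j)$ are nonnegative because $f\geq 0$, so $f(A)\geq 0$. For (2), continuity of $f$ at $0$ together with $f(0)=0$ forces $f(\lambda_j)\to 0$, and boundedness of $f$ guarantees that the above series converges in operator norm; hence Lemma \ref{cptSeq}, applied in reverse to the series for $f(A)$, shows $f(A)\in\mathfrak{LC}(\mathcal{H})$. For (3), the same lemma applied to $f(A)$ gives
\[
\{s_j(f(A)):j\geq 1\}=\{|f(\lambda_j)|:j\geq 1\}=\{f(\lambda_j):j\geq 1\}=\{f(s_j(A)):j\geq 1\}.
\]

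For (4), I would reorder the decomposition so that $\lambda_1\geq \lambda_2\geq\cdots$, which (together with zero-padding whenever $\ker A$ contributes) identifies $\lambda_j$ with $s_j(A)$ as in the definition of the $\lambda_j(H)$ at the end of Section \ref{snSection}. If $f$ is nondecreasing on $\sigma_{\mathcal{H}}(A)$, then monotonicity gives $f(\lambda_1)\geq f(\lambda_2)\geq \cdots$, so the sequence $\{f(\lambda_j)\}_{j=1}^{\infty}$ is already in nonincreasing order with multiplicities. Since the s-numbers of $f(A)$ are uniquely characterized as the nonincreasing enumeration of the values $\{|f(\lambda_j)|\}$ produced by the same decomposition, we conclude $s_j(f(A))=f(\lambda_j)=f(s_j(A))$ for all $j\geq 1$.

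The main technical hurdle is the bookkeeping of zero eigenvalues: if $\ker A$ is nontrivial (in particular, infinite dimensional), the sum representing $A$ need not involve the corresponding basis vectors, while the s-number sequence must still be padded with zeros. The condition $f(0)=0$ is precisely what keeps the two zero-paddings in sync, so the alignment of $\{s_j(f(A))\}$ and $\{f(s_j(A))\}$ in part (4) is preserved without further adjustment.
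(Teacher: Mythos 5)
Your proposal is correct and takes essentially the same route as the paper: both obtain the spectral decomposition via Lemma \ref{cptSeq}, pass to $f(A)=\sum_j f(\lambda_j)\,e_j\otimes_{\mathcal{H}} e_j$ through the functional calculus (with $f(0)=0$ killing any contribution from the kernel), and then read off the four claims from the diagonal formula, with part (4) following because a nondecreasing $f$ turns a nonincreasing enumeration of eigenvalues into a nonincreasing enumeration of their images. The only cosmetic difference is that the paper writes the decomposition directly as $A=\sum_j s_j(A)\,e_j\otimes_{\mathcal{H}} e_j$ so no reordering step is needed, whereas you start from generic coefficients $\lambda_j$ and reorder; your remarks about zero-padding and the role of $f(0)=0$ are a helpful explicit account of a subtlety the paper passes over silently.
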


\subsubsection{symmetric norming functions and symmetrically-normed ideals}

Let $\widehat{c}$ be the set of all sequences of real numbers with only a finite number of nonzero terms.  A function $\Phi:\widehat{c}\rightarrow [0,\infty)$ is called a symmetric norming function if $\Phi$ is a norm that has the following properties:

\begin{itemize}
\item $\Phi(1,0,0,\ldots)=1$

\item $\Phi(\{\xi_j\}_{j=1}^\infty)=\Phi\left(\{|\xi_{\theta(j)}|\}_{j=1}^\infty\right)$ for any bijection $\theta:\mathbb{N}\rightarrow\mathbb{N}$ and any $\{\xi_j\}_{j=1}^\infty\in \widehat{c}.$
\end{itemize}

A norm $|\cdot|_\mathcal{C}$ defined on a two-sided ideal $\mathcal{C}$ in $\mathfrak{L}(\mathcal{H})$ is called a symmetric norm if the following two conditions hold:
\begin{itemize}
\item For any $A,B\in\mathfrak{L}(\mathcal{H})$ and $D\in\mathcal{C}$, $|ADB|_\mathcal{C}\leq\|A\|_{\mathcal{H}}|D|_\mathcal{C}\|B\|_{\mathcal{H}}$

\item For any rank one operator $T$, $|T|_\mathcal{C}=\|T\|_{\mathcal{H}}$.
\end{itemize}
If in addition $\mathcal{C}\neq\{0\}$ and $\mathcal{C}$ is a Banach space with respect to $|\cdot|_\mathcal{C}$, then $\mathcal{C}$ is called a symmetrically-normed ideal.  For any symmetric norming function $\Phi$, let \[\mathcal{C}_\Phi=\{A\in \mathfrak{L}(\mathcal{H}):\Phi\left(\{s_j(A)\}_{j=1}^\infty\right)<\infty\}.\]  By \cite[page 80-82]{Gohb}, $\mathcal {C}_\Phi$ is a symmetrically-normed ideal with corresponding symmetric norm $|\cdot|_\Phi$ defined by \[|A|_\Phi=\Phi\left(\{s_j(A)\}_{j=1}^\infty\right).\]

One of the main properties of symmetric norming functions is given in the following lemma.

\begin{lem}\label{symm}
Let $\Phi$ be a symmetric norming function and let $\{\xi_j\}_{j=1}^\infty, \{\eta_j\}_{j=1}^\infty\in \widehat{c}$.  If \[|\xi_j|\leq|\eta_j| \text{ for all } j\geq 1,\] then \[\Phi(\{\xi_j\}_{j=1}^\infty)\leq \Phi(\{\eta_j\}_{j=1}^\infty).\]
\end{lem}

A proof of this property can be found in \cite[page 71-72]{Gohb}.  It follows from Lemma \ref{symm} that if $\xi^m:=\{\xi_1,\ldots,\xi_m,0,0,\ldots\}$ where $\xi=\{\xi_j\}_{j=1}^\infty$ is any sequence of real numbers and $m\geq1$, then $\{\Phi(\xi^m)\}_{m=1}^\infty$ is a nondecreasing sequence.  So we extend the definition of $\Phi$ by defining \begin{equation}\label{extendDef} \Phi\left(\{\xi_j\}_{j=1}^\infty\right)=\lim_{m\rightarrow\infty}\Phi(\xi^m)\end{equation} for any sequence of real numbers $\{\xi_j\}_{j=1}^\infty$.  With \eqref{extendDef}, one can easily generalize Lemma \ref{symm} to the set of all sequences of real numbers.

Let $\left(\widehat{c}\right)^*=\{\{\xi_j\}_{j=1}^\infty\in\widehat{c}: \xi_j\neq0 \text{ for one or more }j\}$.  For any two symmetric norming functions $\Psi$ and $\Phi$, we will as in \cite[page 76]{Gohb} say $\Phi$ and $\Psi$ are equivalent if \begin{equation}\label{equivSN}\sup_{\{\xi_j\}_{j=1}^\infty\in \left(\widehat{c}\right)^*} \frac{\Phi(\{\xi_j\}_{j=1}^\infty)}{\Psi(\{\xi_j\}_{j=1}^\infty)}<\infty \text{ and } \sup_{\{\xi_j\}_{j=1}^\infty\in \left(\widehat{c}\right)^*} \frac{\Psi(\{\xi_j\}_{j=1}^\infty)}{\Phi(\{\xi_j\}_{j=1}^\infty)}<\infty.\end{equation}  We will write $\Phi \sim \Psi$ to mean $\Phi$ and $\Psi$ are equivalent.

Here are two examples of symmetric norming functions:

\begin{itemize}
\item For any $p>0$, let $\Phi_p:\widehat{c}\rightarrow [0,\infty)$ be defined by \[\Phi_p(\{\xi_j\}_{j=1}^\infty)=\left(\sum_j |\xi_j|^p\right)^{\frac{1}{p}}.\] Then if $p\geq1$, $\Phi_p$ is a symmetric norming function called the Schatten $p$-norm.

\item Let $\Phi_\infty:\widehat{c}\rightarrow [0,\infty)$ be defined by \[\Phi_\infty(\{\xi_j\}_{j=1}^\infty)=\sup_{j\geq1}|\xi_j|.\]  Then $\Phi_\infty$ is a symmetric norming function.  In fact by \cite[page 77]{Gohb} \begin{equation}\label{equivInf} \Phi \sim \Phi_\infty \text{ if and only if } \sup_{n\in\mathbb{N}}\Phi\left(\{\chi_j^{(n)}\}_{j=1}^\infty\right)<\infty\end{equation} where by definition
\[\chi_j^{(n)}= \begin{cases}
    1 &\text{if $j\leq n$} \\
    0 &\text{if $j>n$}.
    \end{cases}\]

\end{itemize}

\section{Main Result and Motivation}

The purpose of this paper is to prove the following Theorem:  \begin{thm}\label{mainThm}
Let $\nu$ be a positive Borel measure on $\mathbb{C}^n$ and $T_\nu$ be the corresponding Toeplitz operator.  Let $0<s\leq 1$, $r>0$ and $\{a_j\}_{j=1}^\infty$ be the $r$-lattice of $\mathbb{C}^n$.  Then for any symmetric norming function  $\Phi$, \[\left(T_\nu\right)^s\in \mathcal{C}_\Phi \text{ if and only if }\Phi\left(\{\left(\widehat{\nu}_r(a_j)\right)^s\}_{j=1}^\infty\right)<\infty.\]
\end{thm}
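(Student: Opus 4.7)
The plan is to prove both directions by using the normalized reproducing kernels $k_{a_j}(z)=K(z,a_j)e^{-|a_j|^2/2}$ to transfer information between the operator $T_\nu$ and the scalar sequence $\{\widehat{\nu}_r(a_j)\}$. Because $T_\nu\geq 0$, the s-numbers satisfy $s_j\bigl((T_\nu)^s\bigr)=\bigl(s_j(T_\nu)\bigr)^s$, so $|(T_\nu)^s|_\Phi$ may be computed directly from the nonnegative eigenvalues of $T_\nu$.

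For the $(\Rightarrow)$ direction I would begin from the standard Fock-space estimate $\widehat{\nu}_r(a_j)\leq C\,\widetilde{\nu}_1(a_j)=C\langle T_\nu k_{a_j},k_{a_j}\rangle$, which follows because $|w-a_j|_\infty<r$ forces $|w-a_j|\leq\sqrt{2n}\,r$ by \eqref{normIn}, so $e^{-|w-a_j|^2}\geq e^{-2nr^2}\chi_{B(a_j,r)}(w)$. Since $0<s\leq 1$, the map $t\mapsto t^s$ is operator concave on $[0,\infty)$, so Hansen's inequality yields $\langle T_\nu k_{a_j},k_{a_j}\rangle^{s}\leq \langle (T_\nu)^s k_{a_j},k_{a_j}\rangle$. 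Next, decompose $\{a_j\}_{j=1}^\infty$ as a finite union of sub-lattices $\Lambda_1,\ldots,\Lambda_M$, each sufficiently spread out that $\{k_a:a\in\Lambda_\ell\}$ is a Bessel sequence, i.e. its Gram matrix is a small perturbation of the identity. A general principle for symmetric norming functions, namely that diagonal matrix entries of a positive operator along a Bessel sequence are majorized by its s-numbers (Ky Fan majorization combined with Lemma \ref{symm}), then gives $\Phi\bigl(\{\langle (T_\nu)^s k_a, k_a\rangle\}_{a\in\Lambda_\ell}\bigr)\leq C_B|(T_\nu)^s|_\Phi$, and summing over the $M$ sub-lattices controls $\Phi\bigl(\{\widehat{\nu}_r(a_j)^s\}_{j=1}^\infty\bigr)$.

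For the $(\Leftarrow)$ direction, use the almost-disjoint covering in \eqref{lattice} to write $\nu=\sum_{j=1}^\infty \nu_j$, where $\nu_j$ is the restriction of $\nu$ to a Borel set $E_j\subseteq \overline{B(a_j,r/2)}$ and the $E_j$ are pairwise disjoint. Each $T_{\nu_j}$ is positive with $\|T_{\nu_j}\|\leq C\widehat{\nu}_r(a_j)$, by a direct kernel bound exploiting the localization of $K(\cdot,a_j)e^{-|a_j|^2}$, and is essentially rank one along the direction $k_{a_j}$. Using these localization estimates together with the Bessel property of $\{k_{a_j}\}$ restricted to each sub-lattice, I would majorize the s-numbers of $T_\nu$ by a constant times a nonincreasing rearrangement of $\{\widehat{\nu}_r(a_j)\}$. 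Raising to the $s$-th power and applying Lemma \ref{symm} then yields $(T_\nu)^s\in \mathcal{C}_\Phi$.

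The main obstacle is the $(\Leftarrow)$ direction. Unlike the Schatten setting, where the identity $\|T\|_p^p=\sum_j s_j(T)^p$ converts the operator question into a scalar summation, a general symmetric norming function $\Phi$ demands control s-number-by-s-number. The key technical step is therefore to majorize the nonincreasing rearrangement of $\{s_j(T_\nu)\}$ by that of $\{\widehat{\nu}_r(a_j)\}$ uniformly in $\Phi$; this is what the almost-disjoint decomposition of $\nu$, combined with the Bessel property of $\{k_{a_j}\}$ along each sub-lattice, is designed to produce.
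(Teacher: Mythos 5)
Your proposal contains two genuine gaps, one in each direction.

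In your $(\Rightarrow)$ direction you invoke Hansen's inequality to claim $\langle T_\nu k_{a_j},k_{a_j}\rangle^{s}\leq \langle (T_\nu)^s k_{a_j},k_{a_j}\rangle$. This has the inequality backwards: for $0<s<1$ the function $t\mapsto t^s$ is \emph{concave}, and the Hansen–Pedersen Jensen operator inequality for concave $f$ with $f(0)\geq 0$ gives $\langle f(A)h,h\rangle\leq f(\langle Ah,h\rangle)$ for unit vectors $h$, i.e. $\langle (T_\nu)^s k,k\rangle\leq\langle T_\nu k,k\rangle^{s}$. (Check with $A=\operatorname{diag}(0,4)$, $h=(1,1)/\sqrt{2}$, $f(t)=\sqrt{t}$: $\langle f(A)h,h\rangle=1<\sqrt{2}=f(\langle Ah,h\rangle)$.) So the chain $\widehat{\nu}_r(a_j)^s\lesssim\langle T_\nu k_{a_j},k_{a_j}\rangle^{s}\lesssim\langle (T_\nu)^s k_{a_j},k_{a_j}\rangle$ breaks at the second link, and the subsequent ``diagonal entries majorized by s-numbers'' step then has nothing to act on. The paper avoids this entirely: rather than passing through $(T_\nu)^s$ via Jensen, it forms $A^*T_\nu A$ (a Gram-type matrix in the kernel functions), splits it into a diagonal part $D$ and an off-diagonal part $E$, applies Proposition \ref{diag} to express $||D|^s|_\Phi$ as $\Phi$ of the Berezin-transform sequence, and then \emph{absorbs} $||E|^s|_\Phi$ using the separation $R=mr$ and an explicit Gaussian decay estimate. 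That absorption step is essential and has no analogue in your sketch.

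In your $(\Leftarrow)$ direction the crux is the claim that you can ``majorize the s-numbers of $T_\nu$ by a constant times a nonincreasing rearrangement of $\{\widehat{\nu}_r(a_j)\}$.'' If you mean weak majorization (partial sums), this does \emph{not} survive taking $s$-th powers when $s<1$: already $(1/2,1/2)\prec_w(1,0)$ but $(1/\sqrt{2},1/\sqrt{2})\not\prec_w(1,0)$. If you mean genuine pointwise domination of the nonincreasing rearrangements $s_j(T_\nu)\leq C\,(\widehat{\nu}_r(a_j))^{\downarrow}_j$, that is a much stronger statement than anything appearing in the Schatten-class literature and you give no argument for it. The paper sidesteps both issues via Proposition \ref{snSum}, the $s$-power quasi-triangle inequality $\bigl|\,|\sum_j H_j|^s\bigr|_\Phi\leq C2^{1-s}\sum_j\bigl|\,|H_j|^s\bigr|_\Phi$ (following Xia), together with a decomposition of $B^{(\rho)}T_\nu (B^{(\rho)})^*$ into operators $H_j$ that are \emph{shifted diagonals} of the Gram matrix. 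Each $H_j$ has the same index set as the full lattice, so $||H_j|^s|_\Phi$ is genuinely $\Phi$ applied to an entire sequence (via Proposition \ref{diag}), not a Schatten-style scalar; the Gaussian off-diagonal decay then makes $\sum_j ||H_j|^s|_\Phi$ converge. Your decomposition $\nu=\sum_j\nu_j$ into individual cube-pieces is a reasonable starting point, but without a device like Proposition \ref{snSum} and the shifted-diagonal accounting, summing the individual contributions produces an $\ell^1$-sum $\sum_j\widehat{\nu}_r(a_j)^s$ rather than $\Phi\bigl(\{\widehat{\nu}_r(a_j)^s\}\bigr)$, which is exactly what you cannot afford for general $\Phi$.
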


Toeplitz operators on various spaces and domains have been studied by many different people; see for e.g. \cite{Josh,ToeFock,zhuOp}.  Kehe Zhu and Josh Isralowitz together gave necessary and sufficient conditions for when $T_\nu$ as above is in the Schatten $p$-classes $\mathcal{C}_{\Phi_p}$ for all $p\geq1$ \cite{Josh}.  Specifically, \cite[Theorem 4.4]{Josh}, the following holds:  \begin{thm}
Let $\nu$ be a positive Borel measure on $\mathbb{C}^n$ and $T_\nu$ be the corresponding Toeplitz operator.  Let $p\geq1$, $r>0$ and $\{a_j\}_{j=1}^\infty$ be the $r$-lattice of $\mathbb{C}^n$.  Then \[T_\nu\in \mathcal{C}_{\Phi_p} \text{ if and only if }\Phi_p\left(\{\widehat{\nu}_r(a_j)\}_{j=1}^\infty\right)<\infty.\]
\end{thm}  Since the sets $\mathcal{C}_{\Phi_p}$ are symmetrically-normed ideals for all $p\geq1$, this paper will generalize the above theorem.  We should also point out that one can define Schatten $p$-classes $\mathcal{C}_{\Phi_p}$ for $0<p<1$.  However for such $p$, $\mathcal{C}_{\Phi_p}$ is not a symmetrically-normed ideal, which is why we are not concerned in this paper with such $p$.

The Schatten $p$-classes have been studied by many different people; see for e.g. \cite{JoshGen,Josh,zhuOp,zhuFock}.  Some results that are stated for certain Schatten $p$-classes have been generalized to symmetrically-normed ideals.  For instance Shige Toshi Kuroda in \cite{kuroda} generalized the Weyl von-Neumann theorem to symmetrically-normed ideals $\mathcal{C}_\Phi$ such that the finite rank operators are dense in $\mathcal{C}_\Phi$ and such that $\Phi\nsim \Phi_1$.

Furthermore there are symmetrically normed ideals which are very different from the Schatten $p$-classes.  For instance it is shown in \cite[page 141]{Gohb} how to construct $\Phi$ such that the finite rank operators are not dense in $\mathcal{C}_\Phi$.

Some other symmetrically normed ideals that appear in the literature are the Orlicz ideals, the Marcinkiewicz ideals and the Lorentz ideals; see \cite{Dykema, QXLorentz, XiaOr} for instance.

\section{Important results about symmetric norming functions and symmetrically-normed ideals}\label{snResults}

We use the same notation in this Section as in Subsection \ref{snSection} above.  The next lemma comes from \cite[page 76]{Gohb}:  \begin{lem}\label{snIneq}
For any symmetric norming function $\Phi$ and $\{\xi_j\}_{j=1}^\infty\in \widehat{c}$,
\[\Phi_\infty(\{\xi_j\}_{j=1}^\infty)\leq \Phi\left(\{\xi_j\}_{j=1}^\infty\right)\leq \Phi_1(\{\xi_j\}_{j=1}^\infty)\]
\end{lem}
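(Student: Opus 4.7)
The plan is to prove each inequality directly from the three defining properties of a symmetric norming function: normalization $\Phi(1,0,0,\ldots)=1$, invariance under permutation/absolute value, and the monotonicity statement of Lemma \ref{symm} (together with the fact that $\Phi$ is a norm, so it is subadditive and positively homogeneous).

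For the upper bound $\Phi(\{\xi_j\}_{j=1}^\infty) \leq \Phi_1(\{\xi_j\}_{j=1}^\infty)$, I would write any sequence $\xi=\{\xi_j\}_{j=1}^\infty\in\widehat{c}$ as a finite sum $\xi=\sum_{j=1}^N \xi_j e_j$, where $e_j$ is the sequence with a $1$ in position $j$ and $0$ elsewhere. Apply the triangle inequality and positive homogeneity of $\Phi$ to get $\Phi(\xi)\leq \sum_{j=1}^N |\xi_j|\,\Phi(e_j)$. Each $\Phi(e_j)$ equals $\Phi(1,0,0,\ldots)=1$ by the permutation-invariance property, and the sum $\sum_{j=1}^N |\xi_j|$ is exactly $\Phi_1(\xi)$, which gives the conclusion.

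For the lower bound $\Phi_\infty(\{\xi_j\}_{j=1}^\infty)\leq \Phi(\{\xi_j\}_{j=1}^\infty)$, since $\xi\in\widehat{c}$ the supremum $\Phi_\infty(\xi)=\sup_j |\xi_j|$ is attained at some index $k$. Choose a bijection $\theta:\mathbb{N}\to\mathbb{N}$ that moves $k$ to the first position; by the second bullet of the definition, $\Phi(\xi)=\Phi(\{|\xi_{\theta(j)}|\}_{j=1}^\infty)$. Since $(|\xi_k|,0,0,\ldots)$ is coordinatewise dominated by $(|\xi_{\theta(j)}|)_{j=1}^\infty$, Lemma \ref{symm} yields $\Phi(|\xi_k|,0,0,\ldots)\leq \Phi(\{|\xi_{\theta(j)}|\})=\Phi(\xi)$. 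Finally, by positive homogeneity and normalization, $\Phi(|\xi_k|,0,0,\ldots)=|\xi_k|\,\Phi(1,0,0,\ldots)=|\xi_k|=\Phi_\infty(\xi)$.

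There is no real obstacle here; the only subtlety worth flagging is that both arguments rely on being able to reduce to the ``standard basis'' sequence $(1,0,0,\ldots)$, which is done via permutation invariance in the lower bound and via the triangle inequality for the finite decomposition in the upper bound. Restricting to $\widehat{c}$ is essential since it guarantees that the supremum in $\Phi_\infty$ is actually attained and that the decomposition $\xi=\sum_{j=1}^N \xi_j e_j$ is finite, so no limiting argument is needed.
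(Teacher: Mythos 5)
Your proof is correct. The paper itself does not prove this lemma---it simply cites it from Gohberg--Kre\v{i}n (page~76)---so there is no in-paper argument to compare against; your two-step argument (triangle inequality and homogeneity against the standard basis sequences for the upper bound, permutation invariance plus the monotonicity Lemma~\ref{symm} and normalization for the lower bound) is exactly the standard proof one would find in that reference, and every step is valid given that $\xi\in\widehat{c}$ makes both the decomposition finite and the supremum attained.
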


We also have the following useful result.  \begin{prop}\label{snSub}
Let $\{\lambda_j\}_{j=1}^\infty\in\widehat{c}$ and $\Phi$ be a symmetric norming function.  Suppose $\{p_j\}_{j=1}^\infty \subseteq \mathbb{N}$ such that for each $j$ and $v$ with $j\neq v$, we have $p_j\neq p_v$.  Then $\Phi\left(\{\lambda_{p_j}\}_{j=1}^\infty\right)\leq \Phi\left(\{\lambda_j\}_{j=1}^\infty\right)$.

\end{prop}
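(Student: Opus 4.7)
The plan is to exploit the two defining features of a symmetric norming function: the rearrangement/absolute-value invariance built into its definition, and the monotonicity provided by Lemma~\ref{symm}. Since $\{\lambda_j\}_{j=1}^\infty\in\widehat{c}$, there exists $N$ with $\lambda_j=0$ for $j>N$, and because the $p_j$'s are pairwise distinct, only finitely many $p_j$ can lie in $\{1,\dots,N\}$, so $\{\lambda_{p_j}\}_{j=1}^\infty\in\widehat{c}$ as well. Thus the statement reduces to a comparison of $\Phi$ on two finite-support sequences, where the bijection property applies directly.

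Set $S=\{p_k:k\geq 1\}$ and define an auxiliary sequence $\{\mu_j\}_{j=1}^\infty$ by
\[
\mu_j=\begin{cases}|\lambda_j| & \text{if } j\in S,\\ 0 & \text{if } j\notin S.\end{cases}
\]
Then $|\mu_j|\leq|\lambda_j|$ for every $j$, so Lemma~\ref{symm} gives $\Phi(\{\mu_j\}_{j=1}^\infty)\leq\Phi(\{|\lambda_j|\}_{j=1}^\infty)=\Phi(\{\lambda_j\}_{j=1}^\infty)$, the last equality coming from the bijection property with $\theta=\operatorname{id}$.

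Next I would construct a bijection $\theta:\mathbb{N}\to\mathbb{N}$ satisfying $\mu_{\theta(j)}=|\lambda_{p_j}|$ for every $j$. On the finite set $A:=\{j:\lambda_{p_j}\neq 0\}$, I set $\theta(j)=p_j$, which lands in the finite set $S_\ast:=\{p_j:\lambda_{p_j}\neq 0\}\subseteq S$; here $j\mapsto p_j$ is a bijection from $A$ onto $S_\ast$ by injectivity of $\{p_j\}$. The complementary index sets $\mathbb{N}\setminus A$ and $\mathbb{N}\setminus S_\ast$ are both countably infinite, so I can choose any bijection between them to finish defining $\theta$; by construction $\mu_{\theta(j)}=0=|\lambda_{p_j}|$ for $j\notin A$. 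The bijection-invariance property of $\Phi$ then yields
\[
\Phi(\{\lambda_{p_j}\}_{j=1}^\infty)=\Phi(\{|\lambda_{p_j}|\}_{j=1}^\infty)=\Phi(\{\mu_{\theta(j)}\}_{j=1}^\infty)=\Phi(\{\mu_j\}_{j=1}^\infty)\leq\Phi(\{\lambda_j\}_{j=1}^\infty),
\]
which is the desired inequality.

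There is no real analytic obstacle here: the only bit requiring care is the bookkeeping in constructing $\theta$, which is routine once one exploits that the supports of $\{\lambda_j\}$ and $\{\lambda_{p_j}\}$ are finite and that $j\mapsto p_j$ matches the relevant "active" index sets bijectively.
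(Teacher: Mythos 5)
Your argument is correct, and it rests on the same two facts the paper uses: Lemma~\ref{symm} and the permutation/absolute-value invariance built into the definition of a symmetric norming function. Where you diverge is in how the infinite rearrangement is handled. The paper works with finite truncations $\Phi(\lambda_{p_1},\dots,\lambda_{p_N},0,\dots)$, builds for each $N$ a bijection $\Theta$ that agrees with $j\mapsto p_j$ only on $\{1,\dots,N\}$, derives the inequality for each truncation, and then passes to the limit via~\eqref{extendDef}. You instead observe up front that $\{\lambda_{p_j}\}_{j=1}^\infty$ itself lies in $\widehat{c}$, introduce the auxiliary sequence $\mu$ supported on $S=\{p_k:k\geq1\}$, and construct a single global bijection $\theta$ by matching the finite active index sets and pairing the two infinite complements; this dispenses with the limiting step and the appeal to~\eqref{extendDef} altogether. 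Both routes are equally elementary and valid; yours is slightly more self-contained, while the paper's requires less explicit bijection construction since any extension of $\Theta$ beyond $\{1,\dots,N\}$ will do.
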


\begin{proof}

Let $N\geq1$ and $\Theta$ be a bijection of $\mathbb{N}$ so that $\Theta(j)=p_j$ for all $j=1,\ldots,N$.  Such a $\Theta$ exist since $\{1,2,\ldots,N\}$ and $\{p_1,p_2,\ldots,p_N\}$ have the same cardinality.  Then by Lemma \ref{symm} and the definition of $\Theta$, \[\begin{split}\Phi\left(\lambda_{p_1},\lambda_{p_2},\ldots,\lambda_{p_N},0,\ldots\right)& =\Phi\left(\left|\lambda_{p_1}\right|,\left|\lambda_{p_2}\right|,\ldots,\left|\lambda_{p_N}\right|,0,\ldots\right)\\&= \Phi\left(\left|\lambda_{\Theta(1)}\right|,\left|\lambda_{\Theta(2)}\right|,\ldots,\left|\lambda_{\Theta(N)}\right|,0,\ldots\right)\\&\leq \Phi\left(\{\left|\lambda_{\Theta(j)}\right|\}_{j=1}^\infty\right)= \Phi\left(\{\lambda_j\}_{j=1}^\infty\right).\end{split}\]  Therefore $\Phi\left(\lambda_{p_1},\lambda_{p_2},\ldots,\lambda_{p_N},0,\ldots\right)\leq \Phi\left(\{\lambda_j\}_{j=1}^\infty\right)$.  Since $N\geq1$ was arbitrary, \eqref{extendDef} yields $\Phi\left(\{\lambda_{p_j}\}_{j=1}^\infty\right)\leq \Phi\left(\{\lambda_j\}_{j=1}^\infty\right)$.
\end{proof}

We also have the following result, a proof of which can be easily made using \cite[Theorem 2.1]{Gohb} and the definition of s-numbers.

\begin{prop}\label{idealEquiv}
Let $\Phi$ and $\Psi$ be two symmetric norming functions.  Then \[\mathcal{C}_\Phi=\mathcal{C}_\Psi \text{ if and only if } \Phi \sim\Psi.\]
\end{prop}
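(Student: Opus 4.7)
The plan is to prove $(\Leftarrow)$ directly from the definitions and $(\Rightarrow)$ by contrapositive through an explicit diagonal-operator construction on $H^2$ whose $s$-number sequence witnesses the failure of equivalence.

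For $(\Leftarrow)$, assume $\Phi \sim \Psi$ with constant $C > 0$ so that $\Phi(\xi) \leq C\,\Psi(\xi)$ and $\Psi(\xi) \leq C\,\Phi(\xi)$ for every $\xi \in (\widehat{c})^*$. Applying these inequalities to each truncation $\xi^m$ and taking the monotone limit in \eqref{extendDef} extends them to every sequence of real numbers, and in particular to $\{s_j(A)\}_{j=1}^\infty$ for any $A \in \mathfrak{L}(H^2)$. Hence $\Phi(\{s_j(A)\}) < \infty$ if and only if $\Psi(\{s_j(A)\}) < \infty$, so $\mathcal{C}_\Phi = \mathcal{C}_\Psi$.

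For $(\Rightarrow)$ I would prove the contrapositive. If $\Phi \not\sim \Psi$ then, by symmetry in $\Phi$ and $\Psi$, I may assume $\sup_{\xi \in (\widehat{c})^*} \Phi(\xi)/\Psi(\xi) = \infty$. Pick $\xi^{(k)} \in (\widehat{c})^*$ with $\Phi(\xi^{(k)})/\Psi(\xi^{(k)}) \geq 4^k$ and, using the absolute-value and permutation invariance of symmetric norming functions together with a positive rescaling, arrange each $\xi^{(k)}$ to have nonnegative nonincreasing entries with $\Psi(\xi^{(k)}) = 2^{-k}$ and $\Phi(\xi^{(k)}) \geq 2^k$. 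Fix an orthonormal basis $\{e_j\}_{j=1}^\infty$ of $H^2$ and a partition of $\mathbb{N}$ into disjoint infinite sets $\{I_k\}_{k=1}^\infty$; place the entries of $\xi^{(k)}$ at distinct positions inside $I_k$ (zero at the remaining positions of $I_k$) to obtain a bounded sequence $\{d_j\}$. By Lemma \ref{snIneq}, any entry coming from $\xi^{(k)}$ is at most $\Phi_\infty(\xi^{(k)}) \leq \Psi(\xi^{(k)}) = 2^{-k} \leq 1/2$, so the diagonal operator $A$ defined by $Ae_j = d_j e_j$ lies in $\mathfrak{L}(H^2)$, and $\{s_j(A)\}$ equals the nonincreasing rearrangement $\tilde{s}$ of $\{d_j\}$.

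It remains to verify $A \in \mathcal{C}_\Psi \setminus \mathcal{C}_\Phi$. For each $N$, the truncation $\tilde{s}^N$ is a permutation of $\sum_k \tau^{(k,N)}$, where $\tau^{(k,N)} \in \widehat{c}$ consists of the top $n_{k,N}$ entries of $\xi^{(k)}$ placed in a position-block disjoint from all other $\tau^{(k',N)}$; then permutation invariance, the triangle inequality, and Lemma \ref{symm} yield
\begin{equation*}
\Psi(\tilde{s}^N) \;\leq\; \sum_k \Psi(\tau^{(k,N)}) \;\leq\; \sum_k \Psi(\xi^{(k)}) \;=\; \sum_k 2^{-k} \;=\; 1.
\end{equation*}
Sending $N \to \infty$ via \eqref{extendDef} gives $\Psi(\tilde{s}) \leq 1$, so $A \in \mathcal{C}_\Psi$. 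Conversely, since $\xi^{(k)}$ is nonincreasing with its entries drawn from $\{d_j\}$, the first $j$ entries of $\xi^{(k)}$ give at least $j$ values of $\{d_j\}$ that are $\geq \xi^{(k)}_j$, so $\tilde{s}_j \geq \xi^{(k)}_j$ for every $j$; the extended Lemma \ref{symm} then yields $\Phi(\tilde{s}) \geq \Phi(\xi^{(k)}) \geq 2^k$ for every $k$, hence $\Phi(\tilde{s}) = \infty$ and $A \notin \mathcal{C}_\Phi$. The main obstacle is the bookkeeping that identifies $\tilde{s}^N$ with a disjoint-support sum of truncations of the $\xi^{(k)}$ and verifies $\tilde{s}_j \geq \xi^{(k)}_j$; once these are in place, everything else is a direct use of Lemmas \ref{symm} and \ref{snIneq} together with the scaling $\Psi(\xi^{(k)}) = 2^{-k}$ and the extension formula \eqref{extendDef}.
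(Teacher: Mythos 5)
Your proof is correct but takes a genuinely different route from the paper. The paper proves the forward direction by a soft functional-analytic argument: it equips the common set $\mathcal{C}$ with the norm $|\cdot|_M=\max\{|\cdot|_\Phi,|\cdot|_\Psi\}$, verifies that $(\mathcal{C},|\cdot|_M)$ is complete using Lemma \ref{snIneq}, applies the open mapping theorem to the identity $I:\mathcal{C}\to\mathcal{C}_\Phi$ (and its twin) to conclude that $|\cdot|_\Phi$ and $|\cdot|_\Psi$ are equivalent norms, and then reads off the sequence inequality $\Phi\sim\Psi$ by evaluating both norms on finite-rank diagonal operators $\sum_{j=1}^N\eta_je_j\otimes_{\mathcal{H}}e_j$. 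You instead prove the contrapositive by an explicit construction: you rescale witness sequences $\xi^{(k)}$ so that $\Psi(\xi^{(k)})=2^{-k}$ while $\Phi(\xi^{(k)})\geq 2^k$, observe via Lemma \ref{snIneq} that the resulting diagonal entries $\{d_j\}$ are bounded by $2^{-k}$ on the block $I_k$ (so the diagonal operator $A$ is compact and its $s$-numbers are the nonincreasing rearrangement $\tilde{s}$), and then show $\Psi(\tilde{s})\leq\sum_k 2^{-k}=1$ while $\Phi(\tilde{s})\geq\Phi(\xi^{(k)})\geq 2^k$ for every $k$, using the triangle inequality, permutation invariance, and Lemma \ref{symm} (extended via \eqref{extendDef}). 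Your route is more elementary in that it avoids Banach space machinery entirely and is more informative in that it exhibits a concrete operator separating the two ideals; the paper's route is shorter once the open mapping theorem is invoked and requires none of the bookkeeping needed to identify $\tilde{s}^N$ with a disjoint-support merge of prefixes of the $\xi^{(k)}$. Both establish the result.
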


\begin{rem}\label{snidealRem}
An immediate consequence of Lemma \ref{eigen} is $|\cdot|_{\Phi_\infty}=\|\cdot\|_{\mathcal{H}}$.  Hence $\mathcal{C}_{\Phi_\infty}=\mathfrak{L}(\mathcal{H})$ and by Proposition \ref{idealEquiv} \begin{equation}\label{snInfinite}\mathcal{C}_\Phi=\mathfrak{L}(\mathcal{H}) \text{ if and only if } \Phi\sim \Phi_\infty. \end{equation}  Furthermore since $\mathcal{H}$ is separable, \eqref{snInfinite} yields \[\mathcal{C}_\Phi\subseteq\mathfrak{LC}(\mathcal{H}) \text{ if and only if } \Phi\nsim \Phi_\infty\]

\end{rem}  From now on, we will use the following notation: for any $g,h\in \mathcal{H}$, let $g\otimes_\mathcal{H} h$ be the operator on $\mathcal{H}$ defined by $(g\otimes_\mathcal{H} h)(v)=\left(\langle v,h\rangle_\mathcal{H} \right)g$ where $\langle\cdot,\cdot\rangle_\mathcal{H}$ is the inner product on $\mathcal{H}\times\mathcal{H}$.  \begin{prop}\label{diag}
Let $\{\lambda_j\}_{j=1}^\infty$be a sequences of complex numbers.  Let $\{e_j\}_{j=1}^\infty$ and $\{b_j\}_{j=1}^\infty$ be two orthonormal sets in $\mathcal{H}$.  If $D=\sum_{j=1}^\infty \lambda_j b_j\otimes_{\mathcal{H}} e_j$ and $D$ is bounded, then for any symmetric norming function $\Phi$, \[|D|_\Phi=\Phi\left(\{\lambda_j\}_{j=1}^\infty\right).\]
\end{prop}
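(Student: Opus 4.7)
The plan is to compute $|D|$ in closed form and then show that $\Phi(\{s_j(D)\}_{j=1}^\infty) = \Phi(\{\lambda_j\}_{j=1}^\infty)$. Since $\Phi(\{\lambda_j\}) = \Phi(\{|\lambda_j|\})$ by the absolute-value invariance built into the definition of a symmetric norming function, and since $|D|_\Phi = \Phi(\{s_j(D)\}_{j=1}^\infty)$ by definition, the task reduces to showing $\Phi(\{s_j(D)\}_{j=1}^\infty) = \Phi(\{|\lambda_j|\}_{j=1}^\infty)$.

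First I would compute the adjoint: since $(b_j \otimes_{\mathcal{H}} e_j)^* = e_j \otimes_{\mathcal{H}} b_j$, we have $D^* = \sum_{j=1}^\infty \overline{\lambda_j}\, e_j \otimes_{\mathcal{H}} b_j$. Using orthonormality of $\{b_j\}_{j=1}^\infty$, the product $D^* D$ collapses to $\sum_j |\lambda_j|^2\, e_j \otimes_{\mathcal{H}} e_j$, and its positive square root --- again diagonal on the closed span of $\{e_j\}_{j=1}^\infty$ --- is $|D| = \sum_j |\lambda_j|\, e_j \otimes_{\mathcal{H}} e_j$.

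I would then split on whether $\lambda_j \to 0$. In the compact case, Lemma \ref{cptSeq} directly gives $\{s_j(D):j\geq 1\} = \{|\lambda_j|:j\geq 1\}$ as multisets, so the nonincreasing enumeration $\{s_j(D)\}_{j=1}^\infty$ is a permutation of $\{|\lambda_j|\}_{j=1}^\infty$ (with zeros appended if $D$ has finite rank). The equality $\Phi(\{s_j(D)\}) = \Phi(\{|\lambda_j|\})$ then follows by a two-sided bound: for each $m$, by choosing distinct indices $p_1,\ldots,p_m$ with $|\lambda_{p_k}| = s_k(D)$, Proposition \ref{snSub} gives $\Phi(s_1(D),\ldots,s_m(D),0,\ldots) \leq \Phi(\{|\lambda_j|\})$, and symmetrically in the reverse direction; passing to the limit through \eqref{extendDef} yields the desired equality.

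The main obstacle is the non-compact case. Here the recursive definition of $\lambda_j(|D|)$ eventually reaches a point $\eta_\infty$ of the condensed spectrum of $|D|$ (an accumulation point of $\{|\lambda_j|\}$ or an eigenvalue of infinite multiplicity), after which all remaining $s_j(D)$ are set to the plateau value $\eta_\infty$; thus $\{s_j(D)\}$ is typically not a literal rearrangement of $\{|\lambda_j|\}$. The same two-sided strategy still works. For the upper bound I would use that for each $\epsilon > 0$ there are infinitely many distinct indices $k$ with $|\lambda_k| > \eta_\infty - \epsilon$: for each finite truncation of $\{s_j(D)\}$ I pick distinct $p_k$ so that each $|\lambda_{p_k}|$ matches a peeled-off eigenvalue or is at least $\eta_\infty - \epsilon$, invoke Lemma \ref{symm} and Proposition \ref{snSub}, and let $\epsilon \to 0$ using norm-continuity of $\Phi$ on the finite-support sequences. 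The reverse direction proceeds analogously, dominating each $|\lambda_k| \leq \eta_\infty$ by the plateau value $s_j(D) = \eta_\infty$ attained for sufficiently large $j$. Letting $m \to \infty$ in \eqref{extendDef} closes both inequalities and completes the proof.
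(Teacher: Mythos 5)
Your reduction to $|D| = \sum_j |\lambda_j|\, e_j \otimes_{\mathcal{H}} e_j$ and your treatment of the compact case are fine, and the upper-bound $\epsilon$-approximation is essentially the same device the paper uses. However, the structural claim that drives your non-compact case is false: it is not true that the recursion defining $\lambda_j(|D|)$ must \emph{eventually reach} the condensed spectrum, so there need not be any ``plateau'' $s_j(D) = \eta_\infty$ for large $j$. Take $\lambda_j = 1 + 1/j$ (so $D$ is bounded and non-compact, $\eta_\infty = 1$); the recursion peels off the isolated eigenvalues $1+1/j$ one at a time, never terminating, and $s_j(D) = 1 + 1/j$ for all $j$ --- the value $\eta_\infty = 1$ is never attained as an s-number. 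The same phenomenon persists when you mix in values below $\eta_\infty$, e.g.\ $\lambda_{2j-1} = 1 + 1/j$, $\lambda_{2j} = 1 - 1/j$: again $s_j(D) = 1 + 1/j$ for every $j$, with no plateau. Your lower bound, which proposes to dominate each $|\lambda_k| \leq \eta_\infty$ ``by the plateau value $s_j(D) = \eta_\infty$ attained for sufficiently large $j$,'' therefore has nothing to point at in such cases. (The underlying idea can be rescued --- one always has $s_j(D) \geq \eta_\infty$, and the eigenvalues of $|D|$ above $\eta_\infty$ form a sub-multiset of $\{s_j(D)\}$, so a more careful injective matching still exists --- but that argument is not what you wrote, and the compact/non-compact dichotomy is not the right one to organize it around.)

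The paper's lower bound avoids all of this. Set $P_N = \sum_{j=1}^N \lambda_j e_j \otimes_{\mathcal{H}} e_j$; then $0 \leq P_N \leq D$, so Lemma~\ref{eigen}(4) gives $s_j(P_N) \leq s_j(D)$ for every $j$, whence $\Phi(\lambda_1,\ldots,\lambda_N,0,\ldots) = |P_N|_\Phi \leq |D|_\Phi$, and letting $N \to \infty$ through \eqref{extendDef} yields $\Phi(\{\lambda_j\}_{j=1}^\infty) \leq |D|_\Phi$. No case analysis, no plateau, no combinatorial matching. For the upper bound, the paper's case split is on whether $\{s_j(D):j\geq 1\} \subseteq \{\lambda_j : j \geq 1\}$ rather than on compactness, which correctly lumps the ``non-compact but never-plateauing'' situation (where the s-numbers \emph{are} a sub-multiset of $\{|\lambda_j|\}$) together with the compact case. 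I would encourage you to adopt the $P_N \leq D$ monotonicity argument for the lower bound; it is both simpler and airtight.
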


\begin{proof}

By Lemma \ref{eigen}, $|D|_\Phi=|\left(|D|\right)|_\Phi$.  Also by direct calculation $|D|=\sum_{j=1}^\infty |\lambda_j| e_j\otimes_{\mathcal{H}} e_j$.  Because of this, we may assume without loss of generality that $\lambda_j\geq0$ and $b_j=e_j$ for all $j\geq1$.

By definition of $D$, $\sigma_\mathcal{H}(D)=\overline{\{\lambda_j:j\geq 1\}}$ where $\overline{\{\lambda_j:j\geq 1\}}$ is the closure of $\{\lambda_j:j\geq 1\}$ in $[0,\infty)$.  Since by definition of the s-numbers $\{s_j(D):j\geq1\}\subseteq \sigma_\mathcal{H}(D)$, we have $\{s_j(D):j\geq1\}\subseteq \overline{\{\lambda_j:j\geq 1\}}$.

Suppose $\{s_j(D):j\geq1\}\subseteq \{\lambda_j:j\geq1\}$.  Recall from Section \ref{snSection} that the sequence $\{s_j(D)\}_{j=1}^\infty$ is enumerated so as to include multiplicities.  This implies that for each $j\geq1$ there exist $p_j\geq1$ so that $s_j(D)=\lambda_{p_j}(D)$ and so that $j\neq m$ implies $p_j\neq p_m$.  Thus $|D|_\Phi= \Phi\left(\{\lambda_{p_j}\}_{j=1}^\infty\right)$.  By Proposition \ref{snSub}, $\Phi\left(\{\lambda_{p_j}\}_{j=1}^\infty\right)\leq \Phi\left(\{\lambda_j\}_{j=1}^\infty\right)$ and so we have \begin{equation}\label{diagLeft}|D|_\Phi\leq \Phi\left(\{\lambda_j\}_{j=1}^\infty\right).\end{equation}

Assume instead $\{s_j(D):j\geq1\}\nsubseteq\{\lambda_j:j\geq1\}$.  Then by the above statements there exist $j\geq1$ so that $s_j(D)$ is an accumulation point of $ \{\lambda_j:j\geq1\}$.  Let $\epsilon>0$.  We claim that \begin{equation}\label{claimDiag}\begin{split} &\text{for each }j\geq1 \text{ there exist }p_j\geq1 \text{ such that } s_j(D)-\frac{\epsilon}{2^j}\leq\lambda_{p_j} \text{ and so that }m\neq j \text{ implies }\\& p_m\neq p_j.\end{split}\end{equation}  To prove the claim, let $j_0$ be the smallest positive integer such that $s_{j_0}(D)$ is an accumulation point of $\{\lambda_j:j\geq1\}$.

If $j_0=1$, then there exist a subsequence $\{\lambda_{p_j}\}_{j=1}^\infty$ of $\{\lambda_j\}_{j=1}^\infty$ satisfying $s_1(D)-\frac{\epsilon}{2^j}\leq\lambda_{p_j}$ for all $j\geq1$.  Also by definition of the s-numbers, $s_j(D)=s_1(D)$ for all $j\geq 1$.  Thus $s_j(D)-\frac{\epsilon}{2^j}\leq\lambda_{p_j}$ for all $j\geq1$ and \eqref{claimDiag} holds.

So assume $j_0>1$.  Then $\{s_j(D):1\leq j\leq j_0-1\}\subseteq \{\lambda_j:j\geq1\}$.  It follows, as above, that for each $j$ with $1\leq j\leq j_0-1$ there exist $p_j\geq1$ so that $s_j(D)=\lambda_{p_j}$ and so that $1\leq m\leq j_0-1$ with $m\neq j$ implies $p_j\neq p_m$.  Let $v=\max\{p_1,\ldots,p_{j_0-1}\}$.  Since $s_{j_0}(D)$ is an accumulation point of $\{\lambda_j:j\geq1\}$, $s_{j_0}(D)$ is an accumulation point of $\{\lambda_j:j>v\}$.  This means there exist a subsequence $\{\lambda_{n_j}\}_{j=1}^\infty$ of $\{\lambda_j\}_{j=v+1}^\infty$ such that $s_{j_0}(D)-\frac{\epsilon}{2^{j_0+j-1}}\leq \lambda_{n_j}$ for all $j\geq 1$.  Again by definition of the s-numbers, $s_j(D)=s_{j_0}(D)$ for all $j\geq j_0$.  Thus setting $p_{j_0-1+d}=n_d$ for all $d\geq1$, the above work yields $s_j(D)-\frac{\epsilon}{2^j}\leq\lambda_{p_j}$ for all $j\geq1$ and again \eqref{claimDiag} holds.

By \eqref{claimDiag}, the triangle inequality for $\Phi$, Lemma \ref{symm}, Lemma \ref{snIneq} and Proposition \ref{snSub} \[\begin{split}|D|_\Phi-\epsilon\sum_{j=1}^\infty\left(\frac{1}{2}\right)^j\leq |D|_\Phi-\epsilon\Phi\left(\left\{\left(\frac{1}{2}\right)^j\right\}_{j=1}^\infty\right)\leq \Phi\left(\left\{s_j(D)-\frac{\epsilon}{2^j}\right\}_{j=1}^\infty\right)&\leq\Phi\left(\{\lambda_{p_j}\}_{j=1}^\infty\right)\\&\leq \Phi\left(\{\lambda_j\}_{j=1}^\infty\right).\end{split}\]  Since $1=\sum_{j=1}^\infty\left(\frac{1}{2}\right)^j$ and $\epsilon>0$ was arbitrary, we again have \eqref{diagLeft}.

Let $N\geq1$ and $P_N=\sum_{j=1}^N \lambda_je_j\otimes_{\mathcal{H}} e_j$.  Then $0\leq P_N\leq D$.  It follows from Lemma \ref{symm} and Lemma \ref{eigen} that $|P_N|_\Phi\leq |D|_\Phi$.  Moreover $|P_N|_\Phi=\Phi\left(\lambda_1,\lambda_2,\ldots,\lambda_N,0,\ldots\right)$ by definition of the s-numbers.  This implies $\Phi\left(\lambda_1,\lambda_2,\ldots,\lambda_N,0,\ldots\right)\leq |D|_\Phi$.  Since $N\geq1$ was arbitrary, \eqref{extendDef} implies \[\Phi\left(\{\lambda_j\}_{j=1}^\infty\right)\leq |D|_\Phi.\]  Therefore by the above and \eqref{diagLeft}, $|D|_\Phi=\Phi\left(\{\lambda_j\}_{j=1}^\infty\right).$

\end{proof}

The next result, which is \cite[Theorem 5.1]{Gohb} will be used to prove Proposition \ref{snSum} below.

\begin{lem}\label{GohbSup}
Let $\Phi$ be an arbitrary symmetric norming function such that $\Phi\nsim\Phi_\infty$.  If an operator $A\in\mathfrak{L}(\mathcal{H})$ is the weak limit of a sequence $\{A_n\}_{n=1}^\infty$ from $\mathcal{C}_\Phi$ and if $\sup_n|A_n|_\Phi<\infty$, then $A\in\mathcal{C}_\Phi$ and $|A|_\Phi\leq\sup_n|A_n|_\Phi$.
\end{lem}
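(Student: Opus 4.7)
The plan is to establish a form of weak-operator lower semicontinuity for $|\cdot|_\Phi$, carried out in three stages: a uniform decay rate for the s-numbers of the $A_n$, passage to finite-rank compressions, and bootstrapping to $A$ itself via a finite-rank norm approximation.

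First, I would note that since $\Phi\nsim\Phi_\infty$, the equivalence \eqref{equivInf} implies $\Phi(\chi^{(j)})\to\infty$ as $j\to\infty$, and Remark~\ref{snidealRem} gives $\mathcal{C}_\Phi\subseteq\mathfrak{LC}(\mathcal{H})$; in particular every $A_n$ is compact. Applying Lemma~\ref{symm} to compare the truncated s-number sequence $(s_1(A_n),\ldots,s_j(A_n),0,\ldots)$ with the tuple consisting of $j$ copies of $s_j(A_n)$ followed by zeros yields the uniform bound
\[
s_j(A_n)\,\Phi\bigl(\chi^{(j)}\bigr)\le|A_n|_\Phi\le M,
\]
so that $s_j(A_n)\le M/\Phi(\chi^{(j)})\to 0$ as $j\to\infty$, with decay independent of $n$.

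Next, for any finite-rank orthogonal projection $P$ with range of dimension $r$, I would show $PA_nP\to PAP$ in operator norm: expanding $Pv=\sum_{i=1}^r c_i f_i$ in an orthonormal basis of the range of $P$, each summand $c_i\,P(A_n-A)f_i$ of $(PA_nP-PAP)v$ lies in the finite-dimensional range of $P$ and converges weakly — hence in norm — to zero, and a uniform estimate in $v$ then gives operator-norm convergence. Since s-numbers are $1$-Lipschitz in operator norm and $\Phi$ restricts to a continuous norm on the finite-dimensional subspace of $\widehat c$ supported in $\{1,\ldots,r\}$, the bound $|PA_nP|_\Phi\le|A_n|_\Phi\le M$ passes to the limit, giving $|PAP|_\Phi\le M$ for every such $P$.

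Finally, I would take an increasing sequence of finite-rank projections $P_N\uparrow I$ strongly. The previous stage gives $|P_NAP_N|_\Phi\le M$, and the first stage applied to $\{P_NAP_N\}$ yields $s_j(P_NAP_N)\le M/\Phi(\chi^{(j)})$ uniformly in $N$. To show $A$ is compact, fix $\epsilon>0$ and choose $j$ with $M/\Phi(\chi^{(j+1)})<\epsilon$, so that each $P_NAP_N=F_N+R_N$ where $F_N$ is the rank-$j$ truncation of the singular value decomposition of $P_NAP_N$ and $\|R_N\|<\epsilon$. A diagonal extraction — using weak sequential compactness of the unit ball of $\mathcal{H}$ and compactness of bounded intervals in $\mathbb{R}$ to obtain convergent subsequences of the at most $j$ singular values and singular vectors defining $F_N$ — produces a rank-$\le j$ operator $F$ with $F_{N_\ell}\to F$ in WOT; consequently $R_{N_\ell}\to A-F$ in WOT, and weak lower semicontinuity of the operator norm gives $\|A-F\|\le\epsilon$. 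Since $\epsilon$ was arbitrary, $A$ lies in the norm closure of the finite-rank operators, hence $A\in\mathfrak{LC}(\mathcal{H})$. Then $P_NAP_N\to A$ in operator norm (since $A$ is compact and $P_N\to I$ strongly), so $s_j(P_NAP_N)\to s_j(A)$ for each $j$, and truncating via Lemma~\ref{symm} together with the extension \eqref{extendDef} yields $|A|_\Phi\le M$. The main obstacle is the compactness step in this last stage: the finite-rank compression bound $\sup_P|PAP|_\Phi\le M$ is strictly weaker than $|A|_\Phi\le M$ in general, and the finite-rank WOT-approximation by $F$ is what bridges the gap.
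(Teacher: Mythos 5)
The paper does not actually prove this lemma; it is stated with a citation to Theorem~5.1 of Gohberg--Kre\v{\i}n (\cite[page 85]{Gohb}) and no argument is supplied. Your proposal is therefore not in competition with a proof in the paper but rather fills in a black box, and on inspection the argument is correct and self-contained. The three stages all check out: the hypothesis $\Phi\nsim\Phi_\infty$ combined with \eqref{equivInf} gives $\Phi(\chi^{(j)})\to\infty$, and comparing the truncation $(s_1(B),\ldots,s_j(B),0,\ldots)$ with $s_j(B)\,\chi^{(j)}$ via Lemma~\ref{symm} yields the uniform decay $s_j(B)\le |B|_\Phi/\Phi(\chi^{(j)})$ for any $B\in\mathcal{C}_\Phi$; the finite-rank compression step works because WOT convergence becomes norm convergence inside the finite-dimensional range of $P$ (Cauchy--Schwarz over the basis $\{f_i\}$), s-numbers are $1$-Lipschitz in operator norm, and $\Phi$ is a genuine norm on sequences supported in $\{1,\ldots,r\}$, so $|PA_nP|_\Phi\le M$ passes to the limit to give $|PAP|_\Phi\le M$; and the final stage correctly bootstraps to compactness of $A$ by splitting $P_NAP_N = F_N + R_N$ with $\|R_N\| = s_{j+1}(P_NAP_N) \le M/\Phi(\chi^{(j+1)}) < \epsilon$, extracting a WOT-convergent subsequence of the rank-$\le j$ parts $F_N$, and invoking WOT lower semicontinuity of the operator norm to get $\|A - F\| \le \epsilon$, after which norm convergence $P_NAP_N \to A$ and \eqref{extendDef} finish the estimate $|A|_\Phi \le M$. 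This is a sound and reasonably economical proof of the cited result; the one place worth flagging explicitly in a write-up is the observation (which you use implicitly) that the weak limits of the orthonormal singular vectors in $F_N$ need not remain orthonormal or even nonzero, which is harmless since all that is needed is that $F$ is bounded of rank $\le j$.
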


\begin{prop}\label{snSum}
Let $0<s\leq1$ and $\Phi$ be a symmetric norming function.  If $\{H_j\}_{j=1}^\infty \subseteq \mathfrak{L}(\mathcal{H})$ then there exists a $C>0$ so that \[\left|\left|\sum_{j=1}^\infty H_j\right|^s\right|_\Phi\leq C2^{1-s}\sum_{j=1}^\infty ||H_j|^s|_\Phi.\]\end{prop}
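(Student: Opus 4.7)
The plan is to first establish the inequality for the finite partial sums $S_N:=\sum_{j=1}^N H_j$ with a constant independent of $N$, then pass to the infinite sum via Lemma~\ref{GohbSup}. The scalar facts that drive the proof, both valid for $0<s\le 1$ and $a,b\ge 0$, are the subadditivity $(a+b)^s\le a^s+b^s$ and the concavity-based inequality $a^s+b^s\le 2^{1-s}(a+b)^s$; the latter is responsible for the constant $2^{1-s}$.

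For finite $N$ one begins with the Ky Fan inequality $s_{i+k-1}(A+B)\le s_i(A)+s_k(B)$, iterated to give the $N$-operator version $s_{N(j-1)+1}(S_N)\le\sum_{k=1}^N s_j(H_k)$. Raising to the $s$-th power and using Proposition~\ref{prop}(4) identifies $s_k(|T|^s)=s_k(T)^s$; the pointwise domination is transferred to the $\Phi$-level by Lemma~\ref{symm}, and the two scalar inequalities above are combined with the triangle inequality for $\Phi$, permutation invariance (again Lemma~\ref{symm}), and Proposition~\ref{snSub} to bound the $N$-fold padded sequence and separate the contribution of each $|H_k|^s$. The output is $||S_N|^s|_\Phi \le 2^{1-s}\sum_{k=1}^N ||H_k|^s|_\Phi$, uniformly in $N$. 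It is essential to apply the $N$-fold Fan inequality directly rather than iterating the pairwise bound, since the latter would compound the constant into $(2^{1-s})^{N-1}$.

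For the infinite sum one applies Lemma~\ref{GohbSup}. The partial sums $S_N$ converge to $\sum_j H_j$ strongly, and $X\mapsto|X|^s$ is continuous on bounded sets in the weak operator topology, so $|S_N|^s$ converges weakly to $|\sum_j H_j|^s$. The uniform bound from the previous step gives $\sup_N||S_N|^s|_\Phi \le 2^{1-s}\sum_{j=1}^\infty||H_j|^s|_\Phi$, and Lemma~\ref{GohbSup} (in the case $\Phi\nsim\Phi_\infty$) delivers the result; when $\Phi\sim\Phi_\infty$ the claim reduces to the operator-norm triangle inequality combined with scalar subadditivity of $x^s$ and is handled separately. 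The main obstacle is extracting the clean constant $2^{1-s}$ at the finite-$N$ stage: a naive Ky Fan plus doubling/padding approach produces a factor that scales with $N$, and reducing this to $2^{1-s}$ requires careful interleaving of the two opposing scalar inequalities above so that the $N$-fold padding in the $\Phi$-estimate is absorbed by the concavity bound.
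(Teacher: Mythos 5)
The key difficulty in this proposition is the finite-$N$ inequality $\bigl|\,|\sum_{j=1}^N H_j|^s\,\bigr|_\Phi \le 2^{1-s}\sum_{j=1}^N\bigl|\,|H_j|^s\,\bigr|_\Phi$, and your sketch does not actually establish it. The paper's proof does not reprove this at all: it simply invokes Lemma 3.1 of Xia \cite[page~1004]{Xia}, which is where the constant $2^{1-s}$ (independent of $N$) comes from, and then takes limits via norm convergence and Lemma~\ref{GohbSup}. You attempt to replace that citation with a Ky Fan argument, but the route you describe cannot yield the right constant. The $N$-fold Fan inequality $s_{N(j-1)+1}(S_N)\le\sum_{k=1}^N s_j(H_k)$ only controls the $s$-numbers along an arithmetic progression of step $N$; to feed a full sequence into $\Phi$ you must pad, and padding $\{s_j(H_k)^s\}$ by repeating each entry $N$ times inflates $\Phi$ by a factor that can be as large as $N$ (e.g.\ for $\Phi=\Phi_1$). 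The scalar bound $a^s+b^s\le 2^{1-s}(a+b)^s$ runs in the wrong direction to cancel that $N$; you acknowledge this as ``the main obstacle'' and assert it can be resolved by ``careful interleaving,'' but no mechanism is given, and I do not see one. Iterating the two-operator bound instead compounds to $(2^{1-s})^{N-1}$, which is also useless. So the heart of the argument is missing; the Xia lemma (or an equivalent Rotfel'd-type result for non-self-adjoint operators) is genuinely needed and is not a corollary of Ky Fan plus $\Phi$-monotonicity.

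Two smaller issues. First, your assertion that $X\mapsto |X|^s$ is continuous on bounded sets in the weak operator topology is false in general (the map $X\mapsto X^*X$ is not WOT continuous; think of shifts). This is easily repaired here, since once $\sum_j\bigl|\,|H_j|^s\,\bigr|_\Phi<\infty$ one deduces $\sum_j\|H_j\|_{\mathcal H}<\infty$ via Lemma~\ref{snIneq}, so the partial sums converge in operator norm and $X\mapsto|X|^s$ is norm-continuous by Stone--Weierstrass and the continuous functional calculus, which is exactly the route the paper takes; but as written your limiting step rests on a false premise. Second, your handling of the case $\Phi\sim\Phi_\infty$ and the appearance of the extra constant $C$ in the statement is fine in spirit, and your overall two-stage structure (uniform finite bound, then Lemma~\ref{GohbSup}) matches the paper. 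The gap is solely, and fatally, in the finite-$N$ estimate.
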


\begin{proof}
Clearly we may assume $\sum_{j=1}^\infty ||H_j|^s|_\Phi<\infty$.  %Note by the continuous functional calculus \cite[page 62]{zhuAl}, $\||A|^s\|_\mathcal{H}=\|A\|_{\mathcal{H}}^s$ for all $A\in\mathfrak{L}(\mathcal{H})$.
Thus by Lemma \ref{snIneq} and the fact that $0<s\leq1$, \begin{equation}\label{sumNorm}\left\|\sum_{j=1}^\infty H_j\right\|_\mathcal{H}^s\leq \left(\sum_{j=1}^\infty \|H_j\|_{\mathcal{H}}\right)^s\leq \sum_{j=1}^\infty \|H_j\|_{\mathcal{H}}^s=\sum_{j=1}^\infty \||H_j|^s\|_{\mathcal{H}}\leq\sum_{j=1}^\infty ||H_j|^s|_\Phi\leq 2^{1-s}\sum_{j=1}^\infty ||H_j|^s|_\Phi.\end{equation}  Thus if $\Phi\sim\Phi_\infty$ \[\left|\left|\sum_{j=1}^\infty H_j\right|^s\right|_\Phi\leq C_1\left\|\sum_{j=1}^\infty H_j\right\|_\mathcal{H}^s\leq C_12^{1-s}\sum_{j=1}^\infty ||H_j|^s|_\Phi\] for some $C_1>0$.

Now assume $\Phi\nsim \Phi_\infty$.  Then by Remark \ref{snidealRem}, $\sum_{j=1}^\infty ||H_j|^s|_\Phi<\infty$ implies $|H_j|^s\in\mathfrak{LC}(\mathcal{H})$ for all $j\geq1$.  So by Proposition \ref{prop} %with $f(x)=x^{\frac{1}{s}}$ for $x\in [0,\|H_j\|^s]$ and 0 otherwise,
$|H_j|\in\mathfrak{LC}(\mathcal{H})$ which itself gives us $H_j\in\mathfrak{LC}(\mathcal{H})$ for every $j\geq1$.  Hence for each $j\geq1$, there exist two orthonormal sets $\left\{\eta_q^{(j)}\right\}_{q=1}^\infty$ and $\left\{\sigma_p^{(j)}\right\}_{p=1}^\infty$ in $\mathcal{H}$ such that $H_j=\sum_{p=1}^\infty s_p(H_j) \eta_p^{(j)}\otimes_{\mathcal{H}} \sigma_p^{(j)}$ and $\lim_{p\rightarrow\infty}s_p(H_j)=0$.

Let $M\geq 1$ and $N\geq1$.  Let $F_N^{(j)}=\sum_{p=1}^N s_p(H_j) \eta_p^{(j)}\otimes_{\mathcal{H}} \sigma_p^{(j)}$.  Then by \cite[Lemma 3.1]{Xia}, \begin{equation}\label{snSumFiniteRank}\left|\left|\sum_{j=1}^M F_N^{(j)}\right|^s\right|_\Phi\leq 2^{1-s}\sum_{j=1}^M \left|\left|F_N^{(j)}\right|^s\right|_\Phi.\end{equation}  It is also true by definition of the s-numbers and Proposition \ref{prop} %, with $f(x)=x^s$ for $x\in[0,\|F_N^{(j)}\|]$ and 0 otherwise,
that \[||F_N^{(j)}|^s|_\Phi=\Phi\left(|s_1\left(H_j\right)|^s,|s_2\left(H_j\right)|^s,\ldots,|s_N\left(H_j\right)|^s,0,\ldots\right)\] for all $j\geq1$.  It follows from Lemma \ref{symm} that \begin{equation}\label{finiteRankIneq}\left|\left|F_N^{(j)}\right|^s\right|_\Phi\leq \left|\left|H_j\right|^s\right|_\Phi\end{equation} for all $j\geq1$.  Then combining \eqref{snSumFiniteRank} and \eqref{finiteRankIneq} gives us \begin{equation}\label{finiteRank}\left|\left|\sum_{j=1}^M F_N^{(j)}\right|^s\right|_\Phi\leq 2^{1-s}\sum_{j=1}^M \left|\left|H_j\right|^s\right|_\Phi\leq 2^{1-s}\sum_{j=1}^\infty \left|\left|H_j\right|^s\right|_\Phi \text{ for all }N\geq 1.\end{equation}

Furthermore since $\lim_{p\rightarrow\infty}s_p(H_j)=0$ for all $j\geq1$, $\{F_N^{(j)}\}_{N=1}^\infty$ converges to $H_j$ with respect to the norm topology.  Then the Stone-Weierstrass Theorem and the continuous functional calculus \cite[page 62]{zhuAl} yields $\left\{\left|\sum_{j=1}^M F_N^{(j)}\right|^s\right\}_{N=1}^\infty$ converges to $\left|\sum_{j=1}^M H_j\right|^s$ with respect to the norm topology.  Then by \eqref{finiteRank} and Lemma \ref{GohbSup}, \[\left|\left(\sum_{j=1}^M H_j\right)^s\right|_\Phi\leq 2^{1-s}\sum_{j=1}^\infty ||H_j|^s|_\Phi \text{ for any }M\geq1.\]  Moreover since $\sum_{j=1}^\infty ||H_j|^s|_\Phi<\infty$, \eqref{sumNorm} implies $\sum_{j=1}^\infty\|H_j\|_{\mathcal{H}}<\infty$.  It follows that\\ \noindent $\left\{\sum_{j=1}^M H_j\right\}_{M=1}^\infty$ converges to $\sum_{j=1}^\infty H_j$ in the norm topology.  Thus another application of the continuous functional calculus \cite[page 62]{zhuAl} and the Stone-Weierstrass theorem shows $\left\{\left|\sum_{j=1}^M H_j\right|^s\right\}_{M=1}^\infty$ converges to $\left|\sum_{j=1}^\infty H_j\right|^s$ in the norm topology.  Then by another application Lemma \ref{GohbSup}, \[\left|\left|\sum_{j=1}^\infty H_j\right|^s\right|_\Phi\leq 2^{1-s}\sum_{j=1}^\infty ||H_j|^s|_\Phi.\]  Therefore with $C=\max\{C_1,1\}$, we have \[\left|\left|\sum_{j=1}^\infty H_j\right|^s\right|_\Phi\leq C2^{1-s}\sum_{j=1}^\infty ||H_j|^s|_\Phi.\]

\end{proof}

\section{Proof of Main Result}\label{mainCh}

From now on $\otimes=\otimes_{H^2} $ where $\otimes_\mathcal{H}$ is defined in Section \ref{snResults} for any separable complex Hilbert space $\mathcal{H}$.

\subsection{An Important Theorem}
The next theorem will be very useful in proving Theorem \ref{mainThm}.

\begin{thm}\label{Berezin}
Let $\rho>0$ $\gamma>0$, $\alpha>0$ $0<s\leq 1$, and $\{b_j\}_{j=1}^\infty$ be the $\rho$-lattice of $\mathbb{C}^n$.  Let $\nu$ be a positive Borel measure on $\mathbb{C}^n$ and $\Phi$ be a symmetric norming function.  Then \[e^{-s\gamma^2\alpha(2n)}\Phi\left(\{\left(\widehat{\nu}_\gamma(b_j)\right)^s\}_{j=1}^\infty\right)\leq \Phi\left(\{\left(\widetilde{\nu}_\alpha(b_j)\right)^s\}_{j=1}^\infty\right)\leq C\Phi\left(\{\left(\widehat{\nu}_\rho(b_j)\right)^s\}_{j=1}^\infty\right)\] where $C=e^{s\alpha4n^2\rho^2}\sum_{m=0}^\infty (2m+1)^{2n}e^{-s\alpha\rho^2\left(m-2n\right)^2}$.
\end{thm}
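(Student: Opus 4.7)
The plan is to prove the two inequalities separately. The left one is nearly immediate; the bulk of the work goes into the right inequality.

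For the left inequality, Lemma \ref{tilde} supplies the pointwise bound $\widehat{\nu}_\gamma(z)\leq e^{\alpha\sqrt{2n}\gamma^2}\widetilde{\nu}_\alpha(z)$ on $\mathbb{C}^n$. Raising both sides to the $s$-th power and evaluating at each lattice point $b_j$ gives $(\widehat{\nu}_\gamma(b_j))^s\leq e^{s\alpha\sqrt{2n}\gamma^2}(\widetilde{\nu}_\alpha(b_j))^s$. The extension of Lemma \ref{symm} to arbitrary real sequences (via \eqref{extendDef}) then transfers this to an inequality between the corresponding $\Phi$-values. Since $\sqrt{2n}\leq 2n$ for every $n\geq 1$, the resulting constant is dominated by $e^{s\alpha(2n)\gamma^2}$, which after rearrangement is exactly the claimed left inequality.

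For the right inequality, the first step is to produce a pointwise bound on $(\widetilde{\nu}_\alpha(b_k))^s$ in terms of the $(\widehat{\nu}_\rho(b_j))^s$. Using \eqref{lattice} to split the defining integral of $\widetilde{\nu}_\alpha(b_k)$ across the closed boxes $\overline{B(b_j,\rho/2)}$ and grouping the indices $j$ by the integer $m\geq 0$ with $|b_k-b_j|_\infty=\rho m$, I would bound the integrand on $\overline{B(b_j,\rho/2)}$. The reverse triangle inequality together with $|b_k-b_j|\geq |b_k-b_j|_\infty=\rho m$ (from \eqref{normIn}) and $|w-b_j|\leq\sqrt{2n}\rho/2$ lead, after elementary algebra, to
\[ e^{-\alpha|b_k-w|^2}\leq e^{4\alpha n^2\rho^2}\,e^{-\alpha\rho^2(m-2n)^2} \]
for all $w\in\overline{B(b_j,\rho/2)}$; the factor $e^{4\alpha n^2\rho^2}$ absorbs both the error from the perturbation $|w-b_j|$ and makes the bound trivial whenever $m\leq 4n$. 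Applying the concavity inequality $(a+b)^s\leq a^s+b^s$, valid for $0<s\leq 1$, then yields
\[ (\widetilde{\nu}_\alpha(b_k))^s\leq e^{4s\alpha n^2\rho^2}\sum_{m=0}^{\infty}e^{-s\alpha\rho^2(m-2n)^2}\sum_{j:\,|b_k-b_j|_\infty=\rho m}(\widehat{\nu}_\rho(b_j))^s. \]

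The final step, which I expect to be the main technical obstacle, is to transfer this pointwise bound to a bound on $\Phi$. For each fixed $m$, the inner sum contains at most $(2m+1)^{2n}$ terms, indexed by lattice offsets $\tau\in\mathbb{Z}^{2n}\cap[-m,m]^{2n}$; each such offset induces a bijection $k\mapsto j_\tau(k)$ on lattice indices via $b_{j_\tau(k)}=b_k+\rho\tau$, which comes from translation invariance of the $\rho$-lattice. Applying $\Phi$ to the pointwise bound and using, in order, the generalized monotonicity of $\Phi$ (Lemma \ref{symm}), the triangle inequality for $\Phi$ across both the sum over $m$ and the sum over $\tau$, and Proposition \ref{snSub} applied to each bijection $k\mapsto j_\tau(k)$ so that $\Phi(\{(\widehat{\nu}_\rho(b_{j_\tau(k)}))^s\}_k)=\Phi(\{(\widehat{\nu}_\rho(b_j))^s\}_j)$, I recover the claimed right-hand estimate with constant $C=e^{s\alpha 4n^2\rho^2}\sum_{m=0}^\infty(2m+1)^{2n}e^{-s\alpha\rho^2(m-2n)^2}$. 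The care required here is mostly bookkeeping: verifying the bijection claim and keeping the constants consistent when regrouping the double sum by $m$ and by $\tau$.
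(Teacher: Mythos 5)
Your proposal is correct and follows essentially the same route as the paper: a pointwise Gaussian tail bound from the reverse triangle inequality, the concavity estimate $(a+b)^s\leq a^s+b^s$, the triangle and monotonicity properties of $\Phi$, and translation invariance of the $\rho$-lattice to reduce each shifted sequence to $\Phi(\{(\widehat{\nu}_\rho(b_j))^s\}_j)$, with the $(2m+1)^{2n}$ count giving the same constant $C$. The only difference is bookkeeping: the paper covers $\mathbb{C}^n$ by the shifted balls $B(b_q+b_j,\rho)$ and defers the grouping by $m=|b_q|_\infty/\rho$ to a final scalar estimate, whereas you keep the covering fixed at $\overline{B(b_j,\rho/2)}$ and group by $m$ from the start, re-parametrizing by offsets $\tau$.
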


\begin{proof}
 By Lemma \ref{tilde} and Lemma \ref{symm}, \[e^{-s\gamma^2\alpha(2n)}\Phi\left(\{\left(\widehat{\nu}_\gamma(b_j)\right)^s\}_{j=1}^\infty\right)\leq \Phi\left(\{\left(\widetilde{\nu}_\alpha(b_j)\right)^s\}_{j=1}^\infty\right).\]

For any $z\in\mathbb{C}^n$ and any $\mathcal{A}\subseteq \mathbb{C}^n$, let $\mathcal{A}+\{z\}=\{a+z:a\in \mathcal{A}\}$.  Then by the definition of a $\rho$-lattice as in Section \ref{snnotat}, \begin{equation}\label{latticeBere}\{b_q\}_{q=1}^\infty=\{b_q\}_{q=1}^\infty+\{b_j\} \text{ for all }j\geq1.\end{equation}  From the above and \eqref{lattice} it follows that $\mathbb{C}^n=\bigcup_{q=1}^\infty B(b_j+b_q,\rho)$ for every $j\geq1$.  Hence \begin{equation}\label{leftBerNu}\begin{split}\left(\widetilde{\nu}_\alpha(b_j)\right)^s= \left(\int_{\mathbb{C}^n}e^{-\alpha|z-b_j|^2}d\nu(z)\right)^s&\leq \left(\sum_{q=1}^\infty \int_{B(b_q+b_j,\rho)}e^{-\alpha|z-b_j|^2}d\nu(z)\right)^s
\\&\leq \sum_{q=1}^\infty \left(\int_{B(b_q+b_j,\rho)}e^{-\alpha|z-b_j|^2}d\nu(z)\right)^s
\end{split}\end{equation}  For any $z\in B(b_q+b_j,\rho)$, we have by \eqref{normIn} \[|z-(b_q+b_j)|\leq \sqrt{2n}|z-(b_q+b_j)|_\infty<\rho\sqrt{2n}.\]  Then from the triangle inequality \[\begin{split}|z-b_j|^2\geq (|z-(b_j+b_q)|-|b_q|)^2&\geq |b_q|^2-2|z-(b_j+b_q)||b_q|\\&\geq |b_q|^2-2\rho\sqrt{2n}|b_q|.\end{split}\]  This implies \begin{equation}\label{rightBerNu}\begin{split}\sum_{q=1}^\infty \left(\int_{B(b_q+b_j,\rho)}e^{-\alpha|z-b_j|^2}d\nu(z)\right)^s&\leq \sum_{q=1}^\infty \left(\int_{B(b_q+b_j,\rho)}e^{-\alpha|b_q|^2+2\alpha\rho\sqrt{2n}|b_q|}d\nu(z)\right)^s\\&= \sum _{q=1}^\infty e^{-s\alpha|b_q|^2+2s\alpha\rho\sqrt{2n}|b_q|}\left(\widehat{\nu}_\rho(b_q+b_j)\right)^s.\end{split}\end{equation}

So by Lemma \ref{symm}, \eqref{leftBerNu} and \eqref{rightBerNu}, \[\Phi\left(\left\{\left(\widetilde{\nu}_\alpha(b_j)\right)^s\right\}_{j=1}^\infty\right)\leq \Phi\left(\left\{\sum _{q=1}^\infty e^{-s\alpha|b_q|^2+2s\alpha\rho\sqrt{2n}|b_q|}\left(\widehat{\nu}_\rho(b_q+b_j)\right)^s\right\}_{j=1}^\infty\right).\]

Then by the triangle inequality for $\Phi$ and \eqref{latticeBere}, \[\begin{split}&\Phi\left(\left\{\sum _{q=1}^\infty e^{-s\alpha|b_q|^2+2s\alpha\rho\sqrt{2n}|b_q|}\left(\widehat{\nu}_\rho(b_q+b_j)\right)^s\right\}_{j=1}^\infty\right)\\&\leq \sum_{q=1}^\infty \Phi\left(e^{-s\alpha|b_q|^2+2s\alpha\rho\sqrt{2n}|b_q|}\{\left(\widehat{\nu}_\rho(b_q+b_j)\right)^s\}_{j=1}^\infty\right)\\&=\left[\sum_{q=1}^\infty e^{-s\alpha|b_q|^2+2s\alpha\rho\sqrt{2n}|b_q|}\Phi\left(\{\left(\widehat{\nu}_\rho(b_q+b_j)\right)^s\}_{j=1}^\infty\right)\right]\\&=
\left[\sum_{q=1}^\infty e^{-s\alpha|b_q|^2+2s\alpha\rho\sqrt{2n}|b_q|}\right]\Phi\left(\{\left(\widehat{\nu}_{\rho}(b_j)\right)^s\}_{j=1}^\infty\right). \end{split}\]

By \eqref{normIn}, \[\begin{split}\sum_{q=1}^\infty e^{-s\alpha|b_q|^2+2s\alpha\rho\sqrt{2n}|b_q|}\leq \sum_{q=1}^\infty e^{-s\alpha|b_q|_\infty^2+s\alpha4n\rho|b_q|_\infty}&=\sum_{q=1}^\infty e^{-s\alpha|b_q|_\infty^2+s\alpha4n\rho|b_q|_\infty-s\alpha4n^2\rho^2+s\alpha4n^2\rho^2}\\&=\sum_{q=1}^\infty e^{-s\alpha\left(|b_q|_\infty-2n\rho\right)^2+s\alpha4n^2\rho^2}\\&= e^{s\alpha4n^2\rho^2}\sum_{q=1}^\infty e^{-s\alpha\left(|b_q|_\infty-2n\rho\right)^2}.\end{split}\]

By definition of a $\rho$-lattice, we have $b_q=\rho(j_1^{(q)}+il_1^{(q)},\ldots,j_{n}^{(q)}+il_n^{(q)})$ for some integers $j_1^{(q)},l_1^{(q)},\ldots,j_{n}^{(q)},\ldots,l_n^{(q)}$.  Hence $|b_q|_\infty=\rho\max\left\{\left|j_d^{(q)}\right|,\left|l_d^{(q)}\right|\right\}_{d=1}^n$.  Thus for any $m\geq0$, $|b_q|_\infty \leq \rho m$ if and only if $|j_d^{(q)}|\leq m$ and $|l_d^{(q)}|\leq m$ for each $1\leq d\leq n$.  Moreover the number of integers $j$ satisfying $|j|\leq m$ is $2m+1$.  This means the cardinality of the set $\{q\in\mathbb{N}: |b_q|_\infty = m\rho\}$ is not larger than $(2m+1)^{2n}$.  So with $B_m=\{q\in\mathbb{N}:|b_q|_\infty=\rho m\}$ the above statements tells us \[\begin{split}e^{s\alpha4n^2\rho^2}\sum_{q=1}^\infty e^{-s\alpha\left(|b_q|_\infty-2n\rho\right)^2}&=e^{s\alpha4n^2\rho^2}\sum_{m=0}^\infty \sum_{q\in B_m}e^{-s\alpha\left(|b_q|_\infty-2n\rho\right)^2}\\&\leq e^{s\alpha4n^2\rho^2}\sum_{m=0}^\infty (2m+1)^{2n}e^{-s\alpha\rho^2\left(m-2n\right)^2}.\end{split}\] %By he Ratio Test for series, $\sum_{m=0}^\infty (2m+1)^{2n}e^{-s\alpha\rho^2\left(m-2n\right)^2}$ converges.

So with $C=e^{s\alpha4n^2\rho^2}\sum_{m=0}^\infty (2m+1)^{2n}e^{-s\alpha\rho^2\left(m-2n\right)^2}$, \[\Phi\left(\left\{\left(\widetilde{\nu}_\alpha(b_j)\right)^s\right\}_{j=1}^\infty\right)\leq C\Phi\left(\{\left(\widehat{\nu}_{\rho}(b_j)\right)^s\}_{j=1}^\infty\right).\]  Therefore \[e^{-s\alpha\gamma^2(2n)}\Phi\left(\{\left(\widehat{\nu}_{\gamma}(b_j)\right)^s\}_{j=1}^\infty\right)\leq \Phi\left(\left\{\left(\widetilde{\nu}_\alpha(b_j)\right)^s\right\}_{j=1}^\infty\right)\leq C\Phi\left(\{\left(\widehat{\nu}_{\rho}(b_j)\right)^s\}_{j=1}^\infty\right).\]

\end{proof}

\subsection{Proof of Sufficiency}\label{sufficiency}
%For convenience we state the sufficiency direction of Theorem \ref{mainThm} as a Theorem itself.

%We will prove the following theorem: \begin{thm}\label{suff} Let $\Phi$ be a symmetric norming function, $0<s\leq 1$, $r>0$, $\{a_j\}_{j=1}^\infty$ be the $r$-lattice of $\mathbb{C}^n$, $\nu$ be a positive Borel measure on $\mathbb{C}^n$ and $T_\nu$ be the corresponding Toeplitz operator.  Then for some constant $C=C(r,s)>0$, \[|\left(T_\nu\right)^s|_\Phi\leq C\Phi\left(\{\left(\widehat{\nu}_r(a_j)\right)^s\}_{j=1}^\infty\right).\] \end{thm}

We will show sufficiency of Theorem \ref{mainThm} here by proving \[|\left(T_\nu\right)^s|_\Phi\leq C\Phi\left(\{\left(\widehat{\nu}_r(a_j)\right)^s\}_{j=1}^\infty\right)\] for some constant $C=C(r,s)>0$.

If $\Phi\left(\{\left(\widehat{\nu}_r(a_j)\right)^s\}_{j=1}^\infty\right)=\infty$, then the above clearly holds.  So assume $\Phi\left(\{\left(\widehat{\nu}_r(a_j)\right)^s\}_{j=1}^\infty\right)<\infty$.  It follows from Lemma \ref{snIneq} that $\Phi_\infty\left(\{\left(\widehat{\nu}_r(a_j)\right)^s\}_{j=1}^\infty\right)<\infty$, which further implies $\|T_\nu\|<\infty$ \cite{Josh}.  Let $\delta$ be as in Theorem \ref{atomCor} and choose $M\in\mathbb{N}$ with $M>2$ and such that $\frac{r}{M}<\delta$.  For ease of notation, let $\rho=\frac{r}{M}$.  Let $\{b_j\}_{j=1}^\infty$ be the $\rho$-lattice of $\mathbb{C}^n$, $\{e_{b_j}\}_{j=1}^\infty$ be an orthonormal set in $H^2$ and let $B^{(\rho)}$ be the operator on $H^2$ defined by \[B^{(\rho)}=\sum_{j=1}^\infty e_{b_j}\otimes k_{b_j}.\]  Then for any $f,g\in H^2$ we have by Bessel's inequality, the Cauchy-Schwarz inequality and \cite[Lemma 2.1]{Josh}, \begin{equation}\label{Bbound}\begin{split}|\langle B^{(\rho)}g,f\rangle|& \leq \sum_{j=1}^\infty |\langle f, e_{b_j}\rangle||\langle k_{b_j},g\rangle|\\&=\sum_{j=1}^\infty |\langle f, e_{b_j}\rangle||g(b_j)|e^{\frac{-|b_j|^2}{2}}\\&\leq \left(\sum_{j=1}^\infty|\langle f,e_{b_j}\rangle|^2\right)^{\frac{1}{2}}\left(\sum_{j=1}^\infty |g(b_j)|^2e^{-|b_j|^2}\right)^{\frac{1}{2}}\\&\leq \|f\|_2J\left(\sum_{j=1}^\infty\int_{B(b_j,\rho)}|g(z)|^2e^{-|z|^2}dV(z)\right)^{\frac{1}{2}}\\&\leq J q\|f\|_2\|g\|_2\end{split}\end{equation} where $J=J(\rho,n)>0$ and where $q\in\mathbb{N}$ is such that every $z\in\mathbb{C}^n$ lies in at most $q$ of the sets $B(b_j,\rho)$.  Such an $q$ clearly exists.  Since $f,g\in H^2$ were arbitrary, \eqref{Bbound} implies $\|B^{(\rho)}\|\leq q J$ and $B^{(\rho)}\in\mathfrak{L}(H^2)$.

Let \begin{equation}\label{ToeInv1}\Gamma^{(\rho)}=\left(B^{(\rho)}\right)^*B^{(\rho)}.\end{equation}  This means $\langle\Gamma^{(\rho)}g,g\rangle=\langle B^{(\rho)}g,B^{(\rho)}g\rangle$ for all $g\in H^2$.  Hence \[\langle\Gamma^{(\rho)}g,g\rangle=\sum_{j=1}^\infty \left|\langle g,k_{b_j}\rangle\right|^2.\]  So $\Gamma^{\rho}\geq0$ and by Theorem \ref{atomCor}, there exist a constant $W_1(\rho)>0$ so that \[W_1(\rho)\|g\|_2^2\leq \langle\Gamma^{(\rho)}g,g\rangle \text{ for all } g\in H^2.\]  Thus by \cite[Corollary 4.9]{Doug}, $\Gamma^{(\rho)}$ is invertible on $H^2$.  So \eqref{ToeInv1} implies \begin{equation}\label{threeProdToe}T_\nu=\Gamma^{{(\rho)}^{-1}}\Gamma^{(\rho)}T_\nu\Gamma^{(\rho)}\Gamma^{{(\rho)}^{-1}}=\Gamma^{{(\rho)}^{-1}}\left(B^{(\rho)}\right)^*B^{(\rho)} T_\nu\left(B^{(\rho)}\right)^*B^{(\rho)}\Gamma^{{(\rho)}^{-1}}.\end{equation}

We claim that,  for some constant $C_1=C_1(\rho,n,s)>0$, \begin{equation}\label{ToePhi}\left|\left(T_\nu\right)^s\right|_\Phi\leq C_1\left|\left(B^{(\rho)} T_\nu\left(B^{(\rho)}\right)^*\right)^s\right|_\Phi.\end{equation}  To prove this claim, first note if $|\left(B^{(\rho)} T_\nu\left(B^{(\rho)}\right)^*\right)^s|_\Phi=\infty$, then the above clearly holds.  So assume instead $|\left(B^{(\rho)} T_\nu\left(B^{(\rho)}\right)^*\right)^s|_\Phi<\infty$.  If $\Phi\nsim\Phi_\infty$, then by Remark \ref{snidealRem}, Lemma \ref{eigen} and Proposition \ref{prop} yields \[\begin{split}&\left|\left(\Gamma^{{(\rho)}^{-1}}\left(B^{(\rho)}\right)^*B^{(\rho)} T_\nu\left(B^{(\rho)}\right)^*B^{(\rho)}\Gamma^{{(\rho)}^{-1}}\right)^s\right|_\Phi \\&=\Phi\left(\left\{s_j\left(\Gamma^{{(\rho)}^{-1}}\left(B^{(\rho)}\right)^*B^{(\rho)} T_\nu\left(B^{(\rho)}\right)^*B^{(\rho)}\Gamma^{{(\rho)}^{-1}}\right)^s\right\}_{j=1}^\infty\right)\\&\leq \|\Gamma^{{(\rho)}^{-1}}\left(B^{(\rho)}\right)^*\|^{2s}\Phi\left(\{s_j(B^{(\rho)}T_\nu \left(B^{(\rho)}\right)^*)^s\}_{j=1}^\infty\right)\\&=\|\Gamma^{{(\rho)}^{-1}}\left(B^{(\rho)}\right)^*\|^{2s}\left|\left(B^{(\rho)} T_\nu\left(B^{(\rho)}\right)^*\right)^s\right|_\Phi\end{split}\]  If instead $\Phi\sim\Phi_\infty$ then for some $V>0$ that only depends on $\Phi$, \[\begin{split}&\left|\left(\Gamma^{{(\rho)}^{-1}}\left(B^{(\rho)}\right)^*B^{(\rho)} T_\nu\left(B^{(\rho)}\right)^*B^{(\rho)}\Gamma^{{(\rho)}^{-1}}\right)^s\right|_\Phi\\&\leq V\left\|\left(\Gamma^{{(\rho)}^{-1}}\left(B^{(\rho)}\right)^*B^{(\rho)} T_\nu\left(B^{(\rho)}\right)^*B^{(\rho)}\Gamma^{{(\rho)}^{-1}}\right)^s\right\|\\&=V\left\|\Gamma^{{(\rho)}^{-1}}\left(B^{(\rho)}\right)^*B^{(\rho)} T_\nu\left(B^{(\rho)}\right)^*B^{(\rho)}\Gamma^{{(\rho)}^{-1}}\right\|^s\\&\leq V\left\|\Gamma^{{(\rho)}^{-1}}\left(B^{(\rho)}\right)^*\right\|^{2s}\left\|\left(B^{(\rho)} T_\nu\left(B^{(\rho)}\right)^*\right)^s\right\|\\&\leq V\left\|\Gamma^{{(\rho)}^{-1}}\left(B^{(\rho)}\right)^*\right\|^{2s}\left|\left(B^{(\rho)} T_\nu\left(B^{(\rho)}\right)^*\right)^s\right|_\Phi\end{split}\] where the latter inequality follows by Lemma \ref{snIneq}.  Thus \eqref{ToePhi} holds.

By direct calculation and \cite[page 188-191]{doubleSeq} \begin{equation}\label{bSumb}B^{(\rho)}T_\nu \left(B^{(\rho)}\right)^*=\sum_{j=1}^\infty \sum_{i=1}^\infty \langle T_\nu k_{b_i},k_{b_i+b_j}\rangle e_{b_i+b_j}\otimes e_{b_i}.\end{equation}  So with $H_j$ defined by \begin{equation}\label{hjdef}H_j=\sum_{i=1}^\infty \langle T_\nu k_{b_i},k_{b_i+b_j}\rangle e_{b_i+b_j}\otimes e_{b_i},\end{equation} we have \begin{equation}\label{Bsum}B^{(\rho)}T_\nu \left(B^{(\rho)}\right)^*=\sum_{j=1}^\infty H_j\end{equation} and by Proposition \ref{snSum} \begin{equation}\label{sumH}\left|\left|\sum_{j=1}^\infty H_j \right|^s\right|_\Phi \leq 2^{1-s}D_1\sum_{j=1}^\infty ||H_j|^s|_\Phi\end{equation} for some constant $D_1>0$.  Combining \eqref{ToePhi}, \eqref{Bsum} and \eqref{sumH} together gives us \begin{equation}\label{sumToe}|\left(T_\nu\right)^s|_\Phi\leq 2^{1-s}C_2\sum_{j=1}^\infty ||H_j|^s|_\Phi\end{equation} where $C_2= C_1D_1$.  By the definition of $H_j$ and Proposition \ref{diag}, \[ ||H_j|^s|_\Phi=\Phi\left(\left\{|\langle T_\nu k_{b_i},k_{b_i+b_j}\rangle|^s\right\}_{i=1}^\infty\right).\]

Let $i\geq 1$ and $j\geq 1$.  Since $\|T_\nu\|<\infty$ \cite[Lemma 4.1]{ToeFock} implies \begin{equation}\label{suffElement}\begin{split}\left|\langle T_\nu k_{b_i},k_{b_i+b_j}\rangle\right|= \left|\int_{\mathbb{C}^n} k_{b_i}(z)\overline{k_{b_i+b_j}(z)}e^{-|z|^2}d\nu(z)\right|&\leq \int_{\mathbb{C}^n} \left|k_{b_i}(z)\overline{k_{b_i+b_j}(z)}e^{-|z|^2}\right|d\nu(z)\\ &=\int_{\mathbb{C}^n} e^{\frac{-|z-b_i|^2}{2}}e^{\frac{-|z-(b_i+b_j)|^2}{2}}d\nu(z)\end{split}\end{equation} where the latter equality comes from direct calculation.

Now by the triangle inequality, \[\frac{|b_j|}{2}\leq|z-b_i| \text{ or } \frac{|b_j|}{2}\leq |z-(b_i+b_j)| \text{ for all } z\in\mathbb{C}^n.\]  If $\frac{|b_j|}{2}\leq |z-(b_i+b_j)|$, then $\frac{|b_j|^2}{8}+\frac{|z-b_i|^2}{2}\leq \frac{|z-(b_i+b_j)|^2}{2}+\frac{|z-b_i|^2}{2}$.  Thus $\frac{|b_j|^2}{8}+\frac{|z-b_i|^2}{2}+\frac{|z-(b_i+b_j)|^2}{2}\leq  |z-(b_i+b_j)|^2+|z-b_i|^2$.  Likewise if $\frac{|b_j|}{2}\leq|z-b_i|$ then $\frac{|b_j|^2}{8}+\frac{|z-b_i|^2}{2}+\frac{|z-(b_i+b_j)|^2}{2}\leq |z-(b_i+b_j)|^2+|z-b_i|^2$.  In either case, \begin{equation}\label{suffElement2}e^{\frac{-|z-b_i|^2}{2}}e^{\frac{-|z-(b_i+b_j)|^2}{2}}\leq e^{\frac{-|b_j|^2}{16}} e^{\frac{-|z-b_i|^2}{4}}e^{\frac{-|z-(b_i+b_j)|^2}{4}}.\end{equation} From \eqref{suffElement} and \eqref{suffElement2} we obtain \begin{equation}\label{boundCpt}\left|\langle T_\nu k_{b_i},k_{b_i+b_j}\rangle\right|^s\leq e^{\frac{-s|b_j|^2}{16}}\left(\widetilde{\nu}_{\frac{1}{4}}(b_i)\right)^s \text{ for every }i\geq1 \text{ and } j\geq1.\end{equation}  Hence by Lemma \ref{symm}, \begin{equation}\label{HnormBound}||H_j|^s|_\Phi\leq e^{\frac{-s|b_j|^2}{16}}\Phi\left(\left\{\left(\widetilde{\nu}_{\frac{1}{4}}(b_i)\right)^s\right\}_{i=1}^\infty\right) \text{ for all }j\geq1.\end{equation}  Thus by \eqref{sumToe}, \[|\left(T_\nu\right)^s|_\Phi\leq 2^{1-s}C_2\Phi\left(\left\{\left(\widetilde{\nu}_{\frac{1}{4}}(b_i)\right)^s\right\}_{i=1}^\infty\right)\sum_{j=1}^\infty e^{\frac{-s|b_j|^2}{16}}.\]

By \eqref{normIn} and an argument similar to one used in the proof of Theorem \ref{Berezin} shows \[\sum_{j=1}^\infty e^{\frac{-s|b_j|^2}{16}}\leq\sum_{j=1}^\infty e^{\frac{-s|b_j|_\infty^2}{16}}\leq \sum_{j=1}^\infty (2j+1)^{2n}e^{\frac{-s(\rho j)^2}{16}}.\]  So in fact \[|\left(T_\nu\right)^s|_\Phi\leq 2^{1-s}C_3\Phi\left(\left\{\left(\widetilde{\nu}_{\frac{1}{4}}(b_i)\right)^s\right\}_{i=1}^\infty\right)\] where $C_3=C_2\sum_{j=1}^\infty e^{\frac{-s|b_j|^2}{16}}$.  Then by Theorem \ref{Berezin} \[|\left(T_\nu\right)^s|_\Phi\leq C_4\Phi\left(\{\left(\widehat{\nu}_\rho(b_j)\right)^s\}_{j=1}^\infty\right)\] for some positive constant $C_4$.

For any $j\geq1$, \eqref{lattice} and the definition of $\rho$ implies there exist a unique $p_j\geq1$ so that $a_{p_j}\in\overline{B\left(b_j,\frac{\rho}{2}\right)}$. Assume $|b_j-a_{p_j}|_\infty=\frac{\rho}{2}$.  It follows from the definition of an $r$-lattice and the definition of $\rho$ that $|Mw-d|=\frac{1}{2}$ for some $w,d\in\mathbb{Z}$.  Thus $\frac{1}{2}\in\mathbb{Z}$, which is a contradiction.  So we must have $a_{p_j}\in B\left(b_j,\frac{\rho}{2}\right)$.  Now if $p_j=p_q$ for some $q\neq j$, then by the triangle inequality, $|b_j-b_q|_\infty\leq |a_{p_j}-b_j|_\infty+|a_{p_j}-b_q|_\infty<\rho$.  This is also a contradiction since $|b_q-b_j|_\infty\geq \rho$.  Thus $j\neq q$ implies $p_j\neq p_q$.  We also have $B(b_j,\rho)\subseteq B(a_{p_j},\frac{3\rho}{2})$.  It follows that $\left(\widehat{\nu}_{\rho}(b_j)\right)^s\leq \left(\widehat{\nu}_r(a_{p_j})\right)^s$.  Hence by Lemma \ref{symm}, \[\Phi\left(\{\left(\widehat{\nu}_{\rho}(b_j)\right)^s\}_{j=1}^\infty\right)\leq \Phi\left(\{\left(\widehat{\nu}_r(a_{p_j})\right)^s\}_{j=1}^\infty\right).\]

It is also true by Proposition \ref{snSub} and the above statements that \[\Phi\left(\{\left(\widehat{\nu}_r(a_{p_j})\right)^s\}_{j=1}^\infty\right)\leq \Phi\left(\{\left(\widehat{\nu}_r(a_j)\right)^s\}_{j=1}^\infty\right).\]  Therefore with $C=C_4$, \[|\left(T_\nu\right)^s|_\Phi\leq C\Phi\left(\{\left(\widehat{\nu}_r(a_j)\right)^s\}_{j=1}^\infty\right).\]

\subsection{Proof of Necessity}\label{necessary}

We will show necessity of Theorem \ref{mainThm} here by proving \[\Phi\left(\{\left(\widehat{\nu}_r(a_j)\right)^s\}_{j=1}^\infty\right)\leq V_1|\left(T_\nu\right)^s|_\Phi\] for some constant $V_1=V_1(r,s)>0$.

If $|\left(T_\nu\right)^s|_\Phi=\infty$ then the above clearly holds.  So assume $|\left(T_\nu\right)^s|_\Phi<\infty$.  Then by Remark \ref{snidealRem}, $\left(T_\nu\right)^s\in\mathfrak{L}(H^2)$.  This implies $\left(\left(T_\nu\right)^s\right)^{\frac{1}{s}}\in\mathfrak{L}(H^2)$.  Moreover by definition, $\left(\left(T_\nu\right)^s\right)^{\frac{1}{s}} = T_\nu$.  Thus $T_\nu\in\mathfrak{L}(H^2)$.

Fix $m\in\mathbb{N}$ such that $m\geq2$ and let $R=mr$.  Let $A_m=\{(j_1+iv_1,\ldots, j_n+iv_n):\{j_q\}_{q=1}^n,\{v_q\}_{q=1}^n\subseteq \{0,1,2,\ldots,m-1\}\}$.  For convenience, we write $A_m=\{y_j\}_{j=1}^{m^{2n}}$.  For each $y_j\in A_m$, let $\Gamma_{y_j}$ be the subset of $\{a_j\}_{j=1}^\infty$ defined by $\Gamma_{y_j}=\{Ry+ry_j:y=(u_1+iw_1,\ldots,u_n+iw_n)\text{ for some }\{u_j\}_{j=1}^n,\{w_j\}_{j=1}^n\subseteq \mathbb{Z}\}$.  It follows that, for any $j\geq1$, if $\omega,\gamma\in \Gamma_{y_j}$ with $\omega\neq\gamma$ then $|\omega-\gamma|_\infty\geq R$.  It is also easy to show that \begin{equation}\label{LatticePartition}\begin{split}&\{a_j\}_{j=1}^\infty=\bigcup_{p=1}^{m^{2n}}\Gamma_{y_p} \text{ and } \Gamma_{y_1},\Gamma_{y_2},\ldots, \Gamma_{y_{m^{2n}}}\text{ are pairwise disjoint}.\end{split}\end{equation}

Now fix $p$ and write $\Gamma_{y_p}$ as $\{\rho_j\}_{j=1}^\infty$ for convenience.  Let $\{e_{\rho_j}\}_{j=1}^\infty$ be an orthonormal set in $H^2$ and $A$ be the operator defined on $H^2$ by $A=\sum_{j=1}^\infty k_{\rho_j}\otimes e_{\rho_j}.$  Then by a calculation almost identical to \eqref{Bbound}, $\|A\|\leq J$ for some constant $J=J(r,n)>0$ and $A\in\mathfrak{L}(H^2)$.  Let $\{v_q\}_{q=1}^\infty=\Gamma_{(0,\ldots,0)}$ and $\{w_q\}_{q=1}^\infty=\Gamma_{(0,\ldots,0)}\setminus\{(0,\ldots,0)\}$ where $(0,\ldots,0)$ is the origin of $\mathbb{C}^n$.  By definition of $\{\rho_j\}_{j=1}^\infty$, we have \[\{\rho_d+v_q\}_{q=1}^\infty=\{\rho_j\}_{j=1}^\infty \text{ for any }d\geq1.\]  Using the above, \cite[page 188-191]{doubleSeq}, and direct calculation, we can write $A^*T_\nu A$ as $A^*T_\nu A=D+E$ where \[D=\sum_j \langle T_\nu k_{\rho_j},k_{\rho_j}\rangle e_{\rho_j}\otimes e_{\rho_j} \text{ and }E=\sum_{q=1}^\infty\sum_{j=1}^\infty \langle T_\nu k_{\rho_j},k_{\rho_j+w_q}\rangle e_{\rho_j+w_q}\otimes e_{\rho_j}.\]  Then from Proposition \ref{snSum}, \[||D|^s|_\Phi\leq W2^{1-s}\left(|(A^*T_\nu A)^s|_\Phi+||E|^s|_\Phi\right)\] for some $W>0$ that is independent of $m$ and $\{\rho_j\}_{j=1}^\infty$.  A calculation similar to the one used to verify \eqref{ToePhi} shows $|(A^*T_\nu A)^s|_\Phi\leq C_1|\left(T_\nu\right)^s|_\Phi$ where $C_1>0$ also does not depend on $m$ or $\{\rho_j\}_{j=1}^\infty$.  This implies \begin{equation}\label{TDE}||D|^s|_\Phi\leq 2^{1-s}WC_1|\left(T_\nu\right)^s|_\Phi+W2^{1-s}||E|^s|_\Phi.\end{equation}

By Proposition \ref{diag}, $||D|^s|_\Phi=\Phi\left(\{|\langle T_\nu k_{\rho_j},k_{\rho_j}\rangle|^s\}_{j=1}^\infty\right)$.  Since $T_\nu$ is bounded, $\langle T_\nu(k_z),k_z\rangle = \widetilde{\nu}_1(z)$ for all $z\in\mathbb{C}^n$ \cite{Josh}.  Thus $||D|^s|_\Phi=\Phi\left(\{\left(\widetilde{\nu}_1(\rho_j)\right)^s\}_{j=1}^\infty\right)$ and by Theorem \ref{Berezin}, \begin{equation}\label{TDE1}e^{-sr^2(2n)}\Phi\left(\{\left(\widehat{\nu}_r(\rho_j)\right)^s\}_{j=1}^\infty\right)\leq ||D|^s|_\Phi.\end{equation}

Note that for each $q\geq1$, $|w_q|\geq R$ by \eqref{normIn}.  So using calculations similar to those used to derive \eqref{suffElement2} shows \begin{equation}\label{calcE}e^{\frac{-|z-\rho_j|^2}{2}}e^{\frac{-|z-(\rho_j+w_q)|^2}{2}}\leq e^{\frac{-R^2}{16}}e^{\frac{-|w_q|^2}{32}}e^{\frac{-|z-\rho_j|^2}{8}}e^{\frac{-|z-(\rho_j+w_q)|^2}{8}}\text{ for all }j\geq 1\text{ and }q\geq1.\end{equation}  %It follows as in \eqref{suffElement} that \[\begin{split}|\langle T_\nu k_{\rho_j},k_{\rho_j+w_q}\rangle|^s&\leq e^{\frac{-sR^2}{16}}e^{\frac{-s|w_q|^2}{32}}\left(\widetilde{\nu}_{\frac{1}{8}}(\rho_j)\right)^s\end{split}\]

For each $q\geq1$, let \[E_q=\sum_{j=1}^\infty \langle T_\nu k_{\rho_j},k_{\rho_j+w_q}\rangle e_{\rho_j+w_q}\otimes e_{\rho_j}.\]  Then $E=\sum_{q=1}^\infty E_q$ and by Proposition \ref{snSum}, $||E|^s|_\Phi\leq 2^{1-s}V_2\sum_{q=1}^\infty ||E_q|^s|_\Phi$ for some $V_2>0$ that only depends on $\Phi$.  So using \eqref{calcE} and an argument almost identical to that used to prove \eqref{HnormBound} shows \[\begin{split}||E|^s|_\Phi&\leq 2^{1-s}V_2e^{\frac{-sR^2}{16}}\Phi\left(\left\{\left(\widetilde{\nu}_{\frac{1}{8}}(\rho_j)\right)^s\right\}_{j=1}^\infty\right)\left(\sum_{q=1}^\infty e^{\frac{-s|w_q|_\infty^2}{32}}\right).\end{split}\]  Since $\{w_q\}_{q=1}^\infty\subseteq \{a_j\}_{j=1}^\infty$, $\sum_{q=1}^\infty e^{\frac{-s|w_q|_\infty^2}{32}}\leq \sum_{j=1}^\infty e^{\frac{-s|a_j|_\infty^2}{32}}$.  Then by an argument similar to one given in the proof of Theorem \ref{Berezin}, $\sum_{j=1}^\infty e^{\frac{-s|a_j|_\infty^2}{32}}\leq\sum_{k=0}^\infty (2k+1)^{2n} e^{\frac{-s(rk)^2}{32}}$.  Hence \begin{equation}\label{TDE2}||E|^s|_\Phi\leq C_2e^{\frac{-sR^2}{16}}\Phi\left(\left\{\left(\widetilde{\nu}_{\frac{1}{8}}(\rho_j)\right)^s\right\}_{j=1}^\infty\right)\end{equation} where $C_2=2^{1-s}V_2\sum_{k=0}^\infty (2k+1)^{2n} e^{\frac{-s(rk)^2}{32}}$.

So by \eqref{TDE}, \eqref{TDE1} and \eqref{TDE2} \[e^{-sr^2(2n)}\Phi\left(\left\{\left(\widehat{\nu}_r\left(\rho_j\right)\right)^s\right\}_{j=1}^\infty\right)\leq WC_1|\left(T_\nu\right)^s|_\Phi+2^{1-s}WC_2e^{\frac{-sR^2}{16}}\Phi\left(\left\{\left(\widetilde{\nu}_{\frac{1}{8}}(\rho_j)\right)^s\right\}_{j=1}^\infty\right).\] %Since $\{\rho_j\}_{j=1}^\infty$ is a subset of $\{a_j\}_{j=1}^\infty$, we have for some $\{v_j\}_{j=1}^\infty\subseteq \mathbb{N}$ with $\rho_j=a_{v_j}$ for every $j\geq1$ and such that $v_j\neq v_k$ for all $j\neq k$.
It is also true by Proposition \ref{snSub} and Theorem \ref{Berezin} that \[\Phi\left(\left\{\left(\widetilde{\nu}_{\frac{1}{8}}(\rho_j)\right)^s\right\}_{j=1}^\infty\right)\leq \Phi\left(\left\{\left(\widetilde{\nu}_{\frac{1}{8}}(a_j)\right)^s\right\}_{j=1}^\infty\right)\leq C_3\Phi\left(\left\{\left(\widehat{\nu}_r(a_j)\right)^s\right\}_{j=1}^\infty\right)\] where $C_3=\sum_{j=0}^\infty (2j+1)^{2n}e^{-sr^2\alpha\left(j-2n\right)^2}$.  It follows that  \[e^{-sr^2(2n)}\Phi\left(\left\{\left(\widehat{\nu}_r\left(\rho_j\right)\right)^s\right\}_{j=1}^\infty\right)\leq WC_1|\left(T_\nu\right)^s|_\Phi+C_4e^{\frac{-sR^2}{16}}\Phi\left(\left\{\left(\widehat{\nu}_r(a_j)\right)^s\right\}_{j=1}^\infty\right)\] where $C_4=2^{1-s}C_2C_3W$.  Since $W$, $C_1$ and $C_4$ are each independent of $m$ and $\{\rho_j\}_{j=1}^\infty$ we have \begin{equation}\label{subLattConcl}e^{-sr^2(2n)}\Phi\left(\left\{\left(\widehat{\nu}_r\left(w\right)\right)^s\right\}_{w\in\Gamma_{y_p}}\right)\leq WC_1|\left(T_\nu\right)^s|_\Phi+C_4e^{\frac{-sR^2}{16}}\Phi\left(\left\{\left(\widehat{\nu}_r(a_j)\right)^s\right\}_{j=1}^\infty\right)\end{equation} for each $y_p\in A_m$.

By \eqref{LatticePartition} and basic properties of symmetric norming functions, \[\Phi\left(\{\left(\widehat{\nu}_r(a_j)\right)^s\}_{j=1}^\infty\right)\leq \sum_{q=1}^{m^{2n}}\Phi\left( \{\left(\widehat{\nu}_r(w)\right)^s\}_{w\in\Gamma_{y_q}}\right).\]  Then \eqref{subLattConcl} give us \[\begin{split}e^{-sr^2(2n)}\Phi\left(\{\left(\widehat{\nu}_r(a_j)\right)^s\}_{j=1}^\infty\right)&\leq e^{-sr^2(2n)}\sum_{q=1}^{m^{2n}}\Phi\left( \{\left(\widehat{\nu}_r(w)\right)^s\}_{w\in\Gamma_{y_q}}\right)\\&\leq m^{2n}WC_1|\left(T_\nu\right)^s|_\Phi+m^{2n}C_4e^{\frac{-sR^2}{16}}\Phi\left(\left\{\left(\widehat{\nu}_r(a_j)\right)^s\right\}_{j=1}^\infty\right).\end{split}\]  Recall that $R=mr$ where $m\geq2$.  Then choosing $m$ large enough so that $C_4m^{2n}e^{\frac{-s(mr)^2}{16}}\leq \frac{e^{-sr^2(2n)}}{2}$, we get from the above \begin{equation}\label{equN4}e^{-sr^2(2n)}\Phi\left(\{\left(\widehat{\nu}_r(a_j)\right)^s\}_{j=1}^\infty\right)\leq m^{2n}WC_1|\left(T_\nu\right)^s|_\Phi+\frac{e^{-sr^2(2n)}}{2}\Phi\left(\left\{\left(\widehat{\nu}_r(a_j)
\right)^s\right\}_{j=1}^\infty\right).\end{equation}  Thus for any positive Borel measure $\nu$ such that $\Phi\left(\left\{\left(\widehat{\nu}_r(a_j)
\right)^s\right\}_{j=1}^\infty\right)<\infty$, \eqref{equN4} gives us \begin{equation}\label{conclu}\Phi\left(\{\left(\widehat\nu_r(a_j)\right)^s\}_{j=1}^\infty\right) \leq m^{2n}2e^{sr^2(2n)}WC_1|\left(T_\nu\right)^s|_\Phi.\end{equation}  However a straightforward approximation argument shows that \eqref{conclu} holds for any positive Borel measure $\nu$ satisfying $|\left(T_\nu\right)^s|_\Phi<\infty$.  Thus with $V_1=m^{2n}2e^{sr^2(2n)}WC_1$, \[\Phi\left(\{\left(\widehat\nu_r(a_j)\right)^s\}_{j=1}^\infty\right) \leq V_1|\left(T_\nu\right)^s|_\Phi.\]

\section{Important Corollaries}

By Theorem \ref{Berezin}, Subsection \ref{sufficiency} and Subsection \ref{necessary}, we actually have the following corollary:  \begin{cor}
Let $0<s\leq 1$, $\alpha>0$, $r>0$ and $\{a_j\}_{j=1}^\infty$ be the $r$-lattice of $\mathbb{C}^n$.  Let $\nu$ be a positive Borel measure on $\mathbb{C}^n$ and $T_\nu$ be the corresponding Toeplitz operator.  Then there exists positive constants $B_1=B_1(r,s,n), B_2=B_2(r,\alpha,s,n)$, $B_3=B_3(r,\alpha,s,n)$ and $B_4=B_4(r,s,n)$ so that for any symmetric norming function $\Phi$, \[\begin{split}|\left(T_\nu\right)^s|_{\Phi}\leq B_1\Phi\left(\{\left(\widehat{\nu}_r(a_j)\right)^s\}_{j=1}^\infty\right)\leq B_1B_2\Phi\left(\{\left(\widetilde{\nu}_\alpha(a_j)\right)^s\}_{j=1}^\infty\right) &\leq B_1B_2B_3\Phi\left(\{\left(\widehat{\nu}_r(a_j)\right)^s\}_{j=1}^\infty\right)\\&\leq B_1B_2B_3B_4|\left(T_\nu\right)^s|_{\Phi}.\end{split}\]
\end{cor}

\subsection*{Acknowledgment}
The author would like to thank his dissertation committee for their help in writing this article: Dr. Jingbo Xia, Dr. Lewis Coburn and Dr. Ching Chou.

\bibliographystyle{plain}
%\bibliography{snIdealFock}

\begin{thebibliography}{27}
%\singlespacing



\bibitem{Doug}Ronald G. Douglas, \textit{Banach Algebras Techniques in Operator Theory}. 2nd edition. New York: Springer 1998.

%\bibitem{Duane}D. Farnsworth. \enquote{Hankel operators, the Segal-Bargmann space, and symmetrically-normed ideals.}  \textit{Journal of Functional Analysis} 260, (2011): 1523-1542.


\bibitem{Dykema}Ken Dykema, Tadeusz Figiel, Gary Weiss and Mariusz Wodzicki, Commutator structure of operator ideals. \textit{Advanced in Mathematics} 185, (2004):  1-79.

\bibitem{QXLorentz}Quanlei Fang and Jingbo Xia, On the membership of Hankel operators in a class of Lorentz ideals. \textit{Journal of Functional Analysis} 267, (2014): 1137-1187.

\bibitem{Gohb}I.C. Gohberg and Kre\v{i}n, \textit{Introduction to the Theory of Linear Nonselfadjoint Operators}. Translated from the Russian by A. Feinstein.  Translations of Mathematical Monographs, Vol. 18 Providence: American Mathematical Society, 1969.

\bibitem{ToeFock} Zhangjian Hu and Xiaofen Lv Toeplitz Operators from One Fock Space to Another. \textit{Integral equations and Operator Theory} 70, (2011): 554-555.



\bibitem{JoshGen}Josh Isralowitz, Jani Virtanen and Lauren Wolf, Schatten class Toeplitz operators on generalized Fock spaces.  \textit{Journal of Mathematical Analysis and Applications} 421, (2014):  329-337.

\bibitem{Josh}Josh Isralowitz and Kehe Zhu, Toeplitz Operators on The Fock Space. \textit{Integral Equations and Operator Theory} 66, (2010):  593-611.

\bibitem{Hank}Svante Janson, Jaak Peetre and Richard Rochberg, Hankel Forms and the Fock Space. \textit{Revista Matem}$\acute{a}$\textit{tica.Iberoamericana} 3.1(1987):104.

\bibitem{doubleSeq} Hugo D. Junghenn \textit{A Course in Real Analysis.} CRC Press, 2015.


\bibitem{kuroda}Shige Toshi Kuroda, On a theorem of Weyl-von Neumann. \textit{Proceedings of the Japan Academy} 34 no.1, (1958): 11-15.


\bibitem{MunMan}James R Munkres, \textit{Analysis on Manifolds}. California: Westview Press 1991. Adobe PDF eBook

%\bibitem{MunTop}J. Munkres, \textit{Topology}. 2nd edition. New Jersey: Prentince Hall Inc, 2000.

%\bibitem{Rud}W. Rudin, \textit{Real and Complex Analysis}. 3rd edition. McGraw-Hill Science/Engineering/Math, 1987.

%\bibitem{RudFunct}W. Rudin, \textit{Functional Analysis}. McGraw-Hill 1973.



%\bibitem{sarason} D.Sarason,  \enquote{Algebras Of Functions On The Unit Circle.} \textit{Bulletin of the American Mathematical Society}. 79 no.2 (1973): 286-294.

%\bibitem{sarasonVMO} D.Sarason,  \enquote{Functions Of Vanishing Mean Oscillation.} \textit{Transactions of the American Mathematical Society}. 207 (1975): 395-396.

%\bibitem{SchFock} A.Schuster, and D.Varolin \enquote{Toeplitz Operators and Carleson Measures on Generalized Bargmann-Fock Spaces.} \textit{Integral Equations and Operator Theory} 72, (2012): 363-392.

%\bibitem{SchUnHil}K.Schmüdgen \textit{Unbounded Self-Adjoint Operators on Hilbert Space}. Springer, 2012.

%\bibitem{SeriesFourier} G.Tolstov \textit{Fourier Series}. General Publishing Company, 1962.

\bibitem{Xia} Jingbo Xia \enquote{Bergman commutators and norm ideals.} \textit{Journal of Functional Analysis}. 263, (2012): 988-1039.

\bibitem{XiaOr} Jingbo Xia \enquote{Diagonalization modulo a class of Orlicz ideals.} \textit{Journal of Functional Analysis}. 239, (2006): 268-296.

%\bibitem{XiaTalk} J.Xia, \enquote{One Hundred years Of Diagonalization} (presentation), \url{http://www.acsu.buffalo.edu/~jxia/Preprints/100yslides.pdf}

%\bibitem{XiaVmo}J. Xia, \enquote{Bounded functions of vanishing mean oscillation on compact metric spaces.} \textit{Journal of Functional Analysis}. 209, (1004): 461-462.


\bibitem{zhuAl}Kehe Zhu, \textit{An Introduction to Operator Algebras}. Boca Raton: CRC Press Inc 2000.

\bibitem{zhuOp}Kehe Zhu, \textit{Operator Theory in Function Spaces}. 2nd edition.Mathematical Surveys and Monographs, 138. Providence: American Mathematical Society, 2007.

\bibitem{zhuFock}Kehe Zhu, \textit{Analysis on Fock Spaces}. New York: Springer 2012. Adobe PDF eBook


%\bibitem{TrigSer} A. Zygmund \textit{Trigonometric Series}. Cambridge University Press, 1968.


\end{thebibliography}

\end{document}